\numberwithin{equation}{section}
\theoremstyle{plain}
\newtheorem{theorem}{Theorem}
\newtheorem{conjecture}[theorem]{Conjecture}
\newtheorem{lemma}[theorem]{Lemma}
\newtheorem{corollary}[theorem]{Corollary}
\newtheorem{proposition}[theorem]{Proposition}
\newtheorem*{theorem*}{Theorem}
\newtheorem*{conjecture*}{Conjecture}
\theoremstyle{definition}
\newtheorem{remark}[theorem]{Remark}
\newtheorem{example}[theorem]{Example}
\newtheorem*{definition}{Definition}
\newcommand{\CC}{{\mathbb{C}}}
\newcommand{\PP}{{\mathbb{P}}}
\newcommand{\QQ}{{\mathbb{Q}}}
\newcommand{\ZZ}{{\mathbb{Z}}}
\def\H{{\mathcal H}}
\begin{document}
\title[Mirror symmetry between orbifold curves and cusp singularities]{Mirror symmetry between orbifold curves and cusp singularities with group action}
\author{Wolfgang Ebeling and Atsushi Takahashi}
\address{Institut f\"ur Algebraische Geometrie, Leibniz Universit\"at Hannover, Postfach 6009, D-30060 Hannover, Germany}
\email{ebeling@math.uni-hannover.de}
\address{
Department of Mathematics, Graduate School of Science, Osaka University, 
Toyonaka Osaka, 560-0043, Japan}
\email{takahashi@math.sci.osaka-u.ac.jp}
\subjclass[2010]{14J33, 32S25, 32S35, 14L30, 53D37}
\thanks{This work has been supported 
by the DFG-programme SPP1388 ''Representation Theory'' (Eb 102/6-1).
The second named author is also supported 
by Grant-in Aid for Scientific Research 
grant numbers 20360043 from the Ministry of Education, 
Culture, Sports, Science and Technology, Japan. 
}
\begin{abstract} We consider an orbifold Landau-Ginzburg model $(f,G)$, where $f$ is an invertible polynomial in three variables and $G$ a finite group of symmetries of $f$ containing the exponential grading operator, and its Berglund-H\"ubsch transpose $(f^T, G^T)$. We show that this defines a mirror symmetry between orbifold curves and cusp singularities with group action. We define Dolgachev numbers for the orbifold curves and Gabrielov numbers for the cusp singularities with group action. We show that these numbers are the same and that the stringy Euler number of the orbifold curve coincides with the $G^T$-equivariant Milnor number of the mirror cusp singularity.
\end{abstract}
\maketitle
\section*{Introduction}
The Homological Mirror Symmetry Conjecture relates algebraic and symplectic geometry via a categorical approach. 
Kontsevich \cite{Kon} originally formulated his conjecture for Calabi-Yau varieties. 
Inspired by the early constructions of mirror manifolds \cite{BH} and recent work \cite{Kat, Kra, Se, t:1}, we propose the following version of this conjecture.

Following \cite{BH}, we consider an invertible polynomial $f(x,y,z)$  and a finite group $G$ of diagonal symmetries of $f$. We assume that $G$ contains the group $G_0$ generated by the exponential grading operator $g_0$. The construction of Berglund and H\"ubsch associates to the pair $(f,G)$ the pair $(f^T,G^T)$ (the dual group $G^T$ has been described by Krawitz \cite{Kra}). Let $\widetilde{G}$ be the extension of $\CC^\ast$ by $G$ (see Section~\ref{sect:Dol}). The orbifold curve ${\mathcal C}_{(f,G)}:=\left[f^{-1}(0)\backslash\{0\}\left/\widetilde{G} \right.\right]$ is mirror dual to the following data: A function $F : U \to \CC$, defined on a suitably chosen submanifold $U$ of $\CC^3$, given by $F(x,y,z)=x^{\gamma_1'} + y^{\gamma_2'} + z^{\gamma_3'}-xyz$ defining a cusp singularity. The group $G^T$ leaves $F$ invariant and we can consider a crepant resolution $Y \to U/G^T$ given by the $G^T$-Hilbert scheme and the 
proper transform $\widehat{X} \subset Y$ of $X=F^{-1}(0)/G^T \subset U/G^T$ (cf.\ \cite{Se}).

Let ${\rm HMF}_S^{\widetilde{G}}(f)$ be the stable homotopy category of $\widetilde{G}$-graded matrix factorizations of $f$. Let $D^b{\rm Coh}{\mathcal C}_{(f,G)}$ be the derived category of the category of coherent sheaves on ${\mathcal C}_{(f,G)}$.

Let $W(x,y,z)$ be a polynomial which has an isolated singularity at the origin. The directed Fukaya category ${\rm Fuk}^{\to}(W)$ is the categorification of a distinguished basis of (graded) Lagrangian vanishing cycles in the Milnor fiber of $W$. The derived category $D^b{\rm Fuk}^\to(W)$ of this $A_\infty$-category is, as a triangulated category, an invariant of the polynomial $W$. 

We arrive at the following generalization of the Homological Mirror Symmetry Conjecture in \cite{t:1}:
\begin{conjecture}
There should exist triangulated equivalences 
\[ 
\xymatrix{
 {\rm HMF}_S^{\widetilde{G}}(f) \ar[r]^(.4){\sim} \ar@{<->}[d]  & D^b{\rm Fuk}^\to(f^T)//G^T \ar@{<->}[d]\\
D^b{\rm Coh}{\mathcal C}_{(f,G)} \ar[r]^(.4){\sim} & D^b{\rm Fuk}^\to(F)//G^T
}
\]
where the two lines are related by semi-orthogonal decompositions,
$F(x,y,z)=x^{\gamma_1'} + y^{\gamma_2'} + z^{\gamma_3'}-xyz$ is right equivalent  to $f^T(x,y,z)-xyz$, and $-//G^T$ means the smallest triangulated category containing 
the orbit category $-/G^T$ $($cf.\ \cite{As, CM} for orbit categories; see also \cite{Kel}$)$.
\end{conjecture}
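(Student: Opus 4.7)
Since the statement is a conjecture, I can only sketch a plausible strategy; the plan is to establish the two horizontal equivalences separately and then check that the vertical semi-orthogonal decompositions on the two sides are compatible under these equivalences.

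For the upper horizontal equivalence ${\rm HMF}_S^{\widetilde{G}}(f) \simeq D^b{\rm Fuk}^\to(f^T)//G^T$, I would follow the pattern of existing HMS proofs for invertible singularities (Seidel, Efimov, Futaki--Ueda). On the B-side one exhibits a full exceptional collection of Koszul-type matrix factorizations of $f$ indexed by characters of $\widetilde{G}$, computes their morphism algebra as an Ext-algebra, and shows that it is Berglund--H\"ubsch/Krawitz dual to the Floer endomorphism algebra of a distinguished basis of Lagrangian vanishing cycles in the Milnor fiber of $f^T$. Passing to $-//G^T$ on the A-side corresponds precisely to enriching with the $\widetilde{G}$-grading on the B-side, so the duality of exponents between $f$ and $f^T$ translates into the required equivalence of $A_\infty$-categories.

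For the lower equivalence $D^b{\rm Coh}\,{\mathcal C}_{(f,G)} \simeq D^b{\rm Fuk}^\to(F)//G^T$, the plan is to use Geigle--Lenzing theory on the B-side: the orbifold curve ${\mathcal C}_{(f,G)}$ carries a tilting bundle whose endomorphism algebra is a squid-type algebra determined by the Dolgachev numbers defined in the paper. On the A-side, $F = f^T - xyz$ is a Morsification defining a cusp singularity whose Milnor fiber admits a distinguished basis of vanishing cycles encoded by the Gabrielov numbers. The main numerical theorem of the paper, asserting equality of the Dolgachev and Gabrielov numbers (together with the Euler-characteristic identity), will match these two combinatorial skeletons; it remains to upgrade the match to an $A_\infty$-quasi-isomorphism between the squid algebra and the directed Fukaya category of $F$, taken modulo the $G^T$-action.

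For the vertical arrows, on the B-side I would invoke Orlov's theorem relating the graded singularity category of the affine cone over ${\mathcal C}_{(f,G)}$ with $D^b{\rm Coh}\,{\mathcal C}_{(f,G)}$: for negative Gorenstein parameter one gets a semi-orthogonal decomposition of ${\rm HMF}_S^{\widetilde{G}}(f)$ with $D^b{\rm Coh}\,{\mathcal C}_{(f,G)}$ as a component and finitely many exceptional objects as the complement. On the A-side, the geometric passage from $f^T$ to $F = f^T - xyz$ is a partial smoothing that introduces additional critical points whose vanishing cycles contribute precisely the extra exceptional components, yielding the mirror semi-orthogonal decomposition.

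The hardest step, I expect, is the rigorous equivariant treatment of the Fukaya category: defining $D^b{\rm Fuk}^\to(W)//G^T$ as a triangulated hull of the orbit category, lifting the diagonal $G^T$-action to Lagrangian vanishing cycles with coherent choices of gradings and spin structures, and checking that the resulting construction is compatible with Orlov's semi-orthogonal decomposition on the mirror side and with Seidel's comparison results between nearby Lefschetz fibrations. The numerical coincidence of Dolgachev and Gabrielov data provides the combinatorial backbone, but promoting it to an equivalence of triangulated categories with $G^T$-action is where the substantial Floer-theoretic and categorical work still lies.
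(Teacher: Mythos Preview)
The paper contains no proof of this statement: it is explicitly formulated as a conjecture in the introduction, and the remainder of the paper is devoted only to providing numerical evidence for it (the equality $A_{(f,G)}=\Gamma_{(f^T,G^T)}$, the identity $e_{\rm st}({\mathcal C}_{(f,G)})=\mu_{(F,G^T)}$, the Poincar\'e series identity, and the worked examples in Section~\ref{sect:Ex}). There is therefore nothing to compare your proposal against at the level of an actual argument.

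That said, your outline is a reasonable program and is consistent with the evidence the paper supplies. In particular, your plan for the lower equivalence correctly anticipates that the Dolgachev/Gabrielov coincidence of Theorem~\ref{thm:mirror} is the combinatorial shadow one would need, and your use of Orlov's theorem for the left vertical arrow matches the structure the authors allude to when they say the two rows are ``related by semi-orthogonal decompositions''. You are also right that the genuinely open part is the equivariant Fukaya-categorical side: the paper defines $-//G^T$ only by reference to \cite{As,CM,Kel} and does not develop any of the Floer-theoretic machinery. So your proposal is not wrong, but you should be clear that it is a sketch of a conjectural strategy rather than a proof, and that the paper itself makes no claim to have carried it out.
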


The aim of the paper is to provide some evidence for this conjecture. 

We start with a pair $(f,G)$ where $f(x,y,z)$ is an invertible polynomial and $G$ a finite group of symmetries of $f(x,y,z)$ containing the group $G_0$. The orbifold curve ${\mathcal C}_{(f,G)}$ has a finite number of isotropic points with isotropy groups of orders $\alpha_1, \ldots, \alpha_r$. These numbers will be called {\em Dolgachev numbers} of the pair $(f,G)$. We show that, in the case $G=G_0$, they coincide with the orbit invariants of the usual $\CC^\ast$-action on $f^{-1}(0)$.

According to \cite{ET}, the polynomial $f^T(x,y,z)-xyz$ is related to a polynomial $F(x,y,z)=x^{\gamma_1'} + y^{\gamma_2'} + z^{\gamma_3'}-xyz$ of type $T_{\gamma_1',\gamma_2',\gamma_3'}$ by a coordinate change. We show that this coordinate change is $G^T$-equivariant. We define {\em Gabrielov} numbers for the pair $(f^T,G^T)$ by considering the $G^T$-action on the Milnor fiber of $F$. As our main result, we show that we have the following correspondences between invariants of the mirror symmetric pairs: The genus of the orbifold curve ${\mathcal C}_{(f,G)}$ is equal to the number of elements of age 1 in $G^T$ fixing only the origin. The Dolgachev numbers of the pair $(f,G)$ coincide with the Gabrielov numbers of the pair $(f^T,G^T)$. The stringy Euler number of ${\mathcal C}_{(f,G)}$ is equal to the $G^T$-equivariant Milnor number of the cusp singularity defined by $F$. Finally, in some cases the Poincar\'e series of the pair $(f,G_0)$ can be written as a quotient of the characteristic polynomials of the pairs $(f^T, G_0^T)$ and $(F,G_0^T)$.

If $G=G_f^{\rm fin}$ is the maximal group of symmetries of $f$, then $G^T=\{ {\rm id} \}$ and our result specializes to the result of \cite{ET}. In particular, we obtain Arnold's strange duality between the exceptional unimodal singularities. For some cases of $(f,G)$ with $G=G_0$, we recover one direction of the extension of Arnold's strange duality observed in \cite{EW85}. Moreover, we recover Seidel's \cite{Se} example. More generally, for some $(f,G)$ with $G_0 \subset G \subset G_f^{\rm fin}$ we get Efimov's examples \cite{Ef}.

The paper is organized as follows. In the first section, we recall the definition of an invertible polynomial, we consider diagonal symmetry groups of such polynomials, and we recall the definition of the dual group according to \cite{Kra}. In the second section, we collect some properties of the group $G_0^T$.  Dolgachev numbers of the orbifold curves are considered in Section~\ref{sect:Dol}. In Section~\ref{sect:E-function}, we recall the definition of the orbifold E-function and the characteristic polynomial and we compute them for cusp singularities with a group action. These results are used in Section~\ref{sect:Gab} to define the Gabrielov numbers for the pair $(f^T, G^T)$. The main results are presented in Section~\ref{sect:mirror}. Finally we conclude the paper by giving some examples in Section~\ref{sect:Ex}.

\section{Orbifold Landau-Ginzburg models}

\begin{sloppypar}

Let $f(x_1,\dots, x_n)$ be a weighted homogeneous complex polynomial. 
This means that there are positive integers $w_1,\dots ,w_n$ and $d$ such that 
$f(\lambda^{w_1} x_1, \dots, \lambda^{w_n} x_n) = \lambda^d f(x_1,\dots ,x_n)$ 
for $\lambda \in \CC^\ast$. We call $(w_1,\dots ,w_n;d)$ a system of {\em weights}. 
If ${\rm gcd}(w_1,\dots ,w_n,d)=1$, then a system of weights is called {\em reduced}. 
A system of weights which is not reduced is called {\em non-reduced}. 
We shall also consider non-reduced systems of weights in this paper. 

\end{sloppypar}

\begin{definition}
A weighted homogeneous polynomial $f(x_1,\dots ,x_n)$ is called {\em invertible} if 
the following conditions are satisfied:
\begin{enumerate}
\item the number of variables ($=n$) coincides with the number of monomials 
in the polynomial $f(x_1,\dots x_n)$, 
namely, 
\[
f(x_1,\dots ,x_n)=\sum_{i=1}^na_i\prod_{j=1}^nx_j^{E_{ij}}
\]
for some coefficients $a_i\in\CC^\ast$ and non-negative integers 
$E_{ij}$ for $i,j=1,\dots, n$,
\item a system of weights $(w_1,\dots ,w_n;d)$ can be uniquely determined by 
the polynomial $f(x_1,\dots ,x_n)$ up to a constant factor ${\rm gcd}(w_1,\dots ,w_n;d)$, 
namely, the matrix $E:=(E_{ij})$ is invertible over $\QQ$, 
\item $f(x_1,\dots ,x_n)$ and the {\em Berglund--H\"{u}bsch transpose} $f^T(x_1,\dots ,x_n)$ 
of $f(x_1,\dots ,x_n)$ defined by
\[
f^T(x_1,\dots ,x_n):=\sum_{i=1}^na_i\prod_{j=1}^nx_j^{E_{ji}}, 
\]
have singularities only at the origin $0\in\CC^n$ which are isolated.
Equivalently, the {\em Jacobian rings} ${\rm Jac}(f)$ of $f$ and ${\rm Jac}(f^t)$ of $f^t$ defined by
\[  
{\rm Jac}(f):=\CC[x_1,\dots ,x_n]\left/\left(\frac{\partial f}{\partial x_1},\dots ,
\frac{\partial f}{\partial x_n}\right)\right.
\]
\[  
{\rm Jac}(f^T):=\CC[x_1,\dots ,x_n]\left/\left(\frac{\partial f^T}{\partial x_1},\dots ,
\frac{\partial f^T}{\partial x_n}\right)\right.
\]
are both finite dimensional algebras over $\CC$ and $\dim_\CC {\rm Jac}(f), \dim_\CC {\rm Jac}(f^T)\ge 1$.
\end{enumerate}
\end{definition}

\begin{definition}
Let $f(x_1,\dots ,x_n)=\sum_{i=1}^na_i\prod_{j=1}^nx_j^{E_{ij}}$ be an invertible polynomial.
The {\em canonical system of weights} $W_f$ is the system of weights 
$(w_1,\dots ,w_n;d)$ given by the unique solution of the equation
\begin{equation*}
E
\begin{pmatrix}
w_1\\
\vdots\\
w_n
\end{pmatrix}
={\rm det}(E)
\begin{pmatrix}
1\\
\vdots\\
1
\end{pmatrix}
,\quad 
d:={\rm det}(E).
\end{equation*}
\end{definition}

\begin{remark}
It follows from Cramer's rule that $w_1,\dots ,w_n$ are positive integers.
Note that the canonical system of weights is in general non-reduced.
\end{remark}

\begin{definition}
Let $f(x_1,\dots, x_n)$ be an invertible polynomial and $W_f=(w_1,\dots ,w_n;d)$ 
the canonical system of weights attached to $f$. 
Define \[ c_f:= {\rm gcd}(w_1, \dots  ,w_n,d). \]
\end{definition}

\begin{definition}
Let $f(x_1,\dots ,x_n)=\sum_{i=1}^na_i\prod_{j=1}^nx_j^{E_{ij}}$ be an invertible polynomial.
The {\em maximal abelian symmetry group} $G_f$ of $f$ is the abelian group defined by 
\[
G_f=\left\{(\lambda_1,\dots ,\lambda_n)\in(\CC^\ast)^n \, \left| \,
\prod_{j=1}^n \lambda_j ^{E_{1j}}=\dots =\prod_{j=1}^n\lambda_j^{E_{nj}}\right\} \right..
\]
\end{definition}
Note that the polynomial $f$ is homogeneous with respect to the natural action of 
$G_f$ on the variables. Namely, we have 
\[
f(\lambda_1 x_1, \dots, \lambda_n x_n) = \lambda f(x_1,\dots ,x_n)
\]
for $(\lambda_1,\dots ,\lambda_n)\in G_f$ where 
$\lambda :=\prod_{j=1}^n \lambda_j ^{E_{1j}}=\dots =\prod_{j=1}^n\lambda_j^{E_{nj}}$.

\begin{definition}
Let $f(x_1,\dots, x_n)$ be an invertible polynomial and $W_f=(w_1,\dots ,w_n;d)$ 
the canonical system of weights attached to $f$. Set
\[ q_i := \frac{w_i}{d}, \quad i=1, \ldots , n.\]
\end{definition}

Note that we have
\begin{equation*}
E
\begin{pmatrix}
q_1\\
\vdots\\
q_n
\end{pmatrix}
=
\begin{pmatrix}
1\\
\vdots\\
1
\end{pmatrix}.
\end{equation*}

\begin{definition} Let $f(x_1,\dots, x_n)$ be an invertible polynomial. Let $G^{\rm fin}_f$ be the group of diagonal symmetries of $f$, i.e. 
\[
G^{\rm fin}_f=\left\{(\lambda_1,\dots ,\lambda_n)\in G_f \, \left| \, \prod_{j=1}^n \lambda_j ^{E_{1j}}=\dots =\prod_{j=1}^n\lambda_j^{E_{nj}}=1 \right\} \right..
\]
\end{definition}

Let ${\bf e}[ - ] = e^{2 \pi \sqrt{-1} \cdot  -}$.
Note that $G^{\rm fin}_f$ always contains the exponential grading operator 
\[
g_0:=({\bf e}[q_1], \ldots , {\bf e}[q_n]).
\]

The group $G^{\rm fin}_f$ can be described as follows (cf.\ \cite{Kra}). Write
\[
E^{-1} = \begin{pmatrix}
a_{11} & \cdots & a_{1n}\\
\vdots & \ddots & \vdots \\
a_{n1} & \cdots & a_{nn}
\end{pmatrix} = 
\begin{pmatrix} \sigma_1 & \cdots &  \sigma_n \end{pmatrix} \mbox{ with column vectors }
\sigma_j = \begin{pmatrix} 
a_{1j} \\ \vdots \\ a_{nj} \end{pmatrix}.
\]
Each $\sigma_j$ defines an element of $G^{\rm fin}_f$ by
\[ 
\sigma_j= ({\bf e}[a_{1j}], \ldots, {\bf e}[a_{nj}]) \in (\CC^\ast)^n.
\]
One can easily see that each element $g \in G^{\rm fin}_f$ can be written $g = \prod_{j=1}^n \sigma_j^{s_j}$ for some integers $s_j$, $j=1, \ldots, n$. In particular, $g_0=\prod_{j=1}^n \sigma_j$.

Let $G \subset G^{\rm fin}_f$ be a subgroup. M.~Krawitz \cite{Kra} defines  a dual group $G^T$ as follows. 
Write
\[ 
E^{-1} = \begin{pmatrix} \rho_1\\ \vdots \\  \rho_n \end{pmatrix} \mbox{ with row vectors }
\rho_i = (a_{i1},  \ldots , a_{in}).
\]
As above,  each $\rho_i$ can be considered as an element of $G^{\rm fin}_{f^T}$ as follows:
\[ 
\rho_i= ({\bf e}[a_{i1}], \ldots, {\bf e}[a_{in}]) \in (\CC^\ast)^n.
\]
These elements generate $G^{\rm fin}_{f^T}$.

\begin{definition} The {\em dual group} $G^T$ is defined by
\[
G^T := \left\{ \prod_{i=1}^n \rho_i^{r_i} \, \left| \, (r_1, \ldots, r_n) E^{-1} \begin{pmatrix} s_1 \\ \vdots \\ s_n \end{pmatrix} \in \ZZ \mbox{ for all } \prod_{j=1}^n \sigma_j^{s_j} \in G \right\} \right. .
\]
\end{definition}

\begin{definition} Let $f$ be an invertible polynomial and let $G \subset G^{\rm fin}_f$ be a subgroup. The pair $(f,G)$ is called an {\em orbifold Landau-Ginzburg model}.
\end{definition}

\begin{proposition} \label{Prop:Gfin/G}
Let $G \subset G^{\rm fin}_f$ be a subgroup. Then there is a natural group isomorphism 
\[ {\rm Hom}(G^{\rm fin}_f/G, \CC^\ast) \simeq G^T. \]
\end{proposition}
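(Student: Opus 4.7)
The plan is to exhibit an explicit bilinear pairing between $G_f^{\mathrm{fin}}$ and $G_{f^T}^{\mathrm{fin}}$ whose left and right annihilators recover the Krawitz duality, and then interpret $G^T$ as the subgroup of characters trivial on $G$.

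Concretely, I would define
\[
\langle\, \cdot\, ,\, \cdot\, \rangle : G_f^{\mathrm{fin}} \times G_{f^T}^{\mathrm{fin}} \longrightarrow \CC^\ast,\qquad
\left\langle \prod_{j=1}^n \sigma_j^{s_j},\; \prod_{i=1}^n \rho_i^{r_i}\right\rangle := {\bf e}\!\left[(r_1,\ldots,r_n)\,E^{-1}\,(s_1,\ldots,s_n)^{T}\right].
\]
The first task is to check this is well-defined in each variable. If $\prod_j \sigma_j^{s_j}=1$ in $G_f^{\mathrm{fin}}$, then by definition $E^{-1}(s_1,\ldots,s_n)^T\in\ZZ^n$, so the pairing value equals $1$ for every choice of $(r_i)$; the symmetric statement works for the $\rho_i$ using that rows of $E^{-1}$ are the ``columns'' of $(E^T)^{-1}$. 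Non-degeneracy is equally immediate: taking $(r_1,\ldots,r_n)$ to be each standard basis vector forces $E^{-1}(s_1,\ldots,s_n)^T\in\ZZ^n$, so the left kernel is trivial, and symmetrically on the right. Hence the pairing induces a group isomorphism $G_{f^T}^{\mathrm{fin}}\xrightarrow{\sim}\mathrm{Hom}(G_f^{\mathrm{fin}},\CC^\ast)$.

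With this in hand, the proposition becomes transparent. A character $\chi\in\mathrm{Hom}(G_f^{\mathrm{fin}},\CC^\ast)$ factors through $G_f^{\mathrm{fin}}/G$ if and only if $\chi|_G\equiv 1$; translating via the pairing, the element $\prod_i\rho_i^{r_i}\in G_{f^T}^{\mathrm{fin}}$ corresponding to $\chi$ satisfies this exactly when ${\bf e}[(r_1,\ldots,r_n)E^{-1}(s_1,\ldots,s_n)^T]=1$ for every $\prod_j\sigma_j^{s_j}\in G$, i.e.\ when $(r_1,\ldots,r_n)E^{-1}(s_1,\ldots,s_n)^T\in\ZZ$ for all such vectors $(s_j)$. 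This is precisely the defining condition for $\prod_i\rho_i^{r_i}\in G^T$. Therefore the isomorphism $G_{f^T}^{\mathrm{fin}}\simeq\mathrm{Hom}(G_f^{\mathrm{fin}},\CC^\ast)$ restricts to the desired $G^T\simeq \mathrm{Hom}(G_f^{\mathrm{fin}}/G,\CC^\ast)$.

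The only genuinely delicate point is the well-definedness of the pairing, which rests on the observation that the kernel of the surjection $\ZZ^n\twoheadrightarrow G_f^{\mathrm{fin}}$, $(s_1,\ldots,s_n)\mapsto \prod_j \sigma_j^{s_j}$, is exactly $E\cdot\ZZ^n$ (and likewise for $f^T$ with $E^T$); this identifies $G_f^{\mathrm{fin}}\cong \ZZ^n/E\ZZ^n$ and $G_{f^T}^{\mathrm{fin}}\cong\ZZ^n/E^T\ZZ^n$, and the pairing above is the natural one induced by $E^{-1}$ modulo $\ZZ$. Once this identification is recorded, both well-definedness and perfectness of the pairing are formal, and the rest of the argument is the standard character-theoretic duality between a finite abelian group and its Pontryagin dual.
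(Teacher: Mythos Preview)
Your proof is correct and follows essentially the same approach as the paper: both identify $G_{f^T}^{\mathrm{fin}}$ with $\mathrm{Hom}(G_f^{\mathrm{fin}},\CC^\ast)$ and then recognize $G^T$ as the subgroup of characters vanishing on $G$, via the same integrality condition $(r_1,\ldots,r_n)E^{-1}(s_1,\ldots,s_n)^T\in\ZZ$. The only difference is packaging: the paper applies $\mathrm{Hom}(-,\CC^\ast)$ to the short exact sequence $\{1\}\to G\to G_f^{\mathrm{fin}}\to G_f^{\mathrm{fin}}/G\to\{1\}$ and asserts the isomorphism $\mathrm{Hom}(G_f^{\mathrm{fin}},\CC^\ast)\simeq G_{f^T}^{\mathrm{fin}}$, whereas you spell out the underlying perfect pairing explicitly and verify its well-definedness and non-degeneracy.
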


\begin{proof}
Apply the functor ${\rm Hom}( -, \CC^\ast)$ to the short exact sequence
\[ 
\begin{CD}
\{ 1 \}  \longrightarrow G \longrightarrow G^{\rm fin}_f \longrightarrow G^{\rm fin}_f/G \longrightarrow \{ 1 \} .
\end{CD}
\]
Then we obtain the short exact sequence
\[ \{ 1 \} \longrightarrow {\rm Hom}(G^{\rm fin}_f/G, \CC^\ast) \longrightarrow {\rm Hom}(G^{\rm fin}_f, \CC^\ast) \longrightarrow {\rm Hom}(G, \CC^\ast) \longrightarrow \{ 1\} . \]
The group ${\rm Hom}(G^{\rm fin}_f, \CC^\ast)$ is isomorphic to the group $G^{\rm fin}_{f^T}$. Let $g^T=\prod_{i=1}^n \rho_i^{r_i} \in G^{\rm fin}_{f^T}$. Then $g^T \in G^T$ if and only if 
\[ (r_1, \ldots, r_n) E^{-1} \begin{pmatrix} s_1 \\ \vdots \\ s_n \end{pmatrix} \in \ZZ \mbox{ for all } \prod_{j=1}^n \sigma_j^{s_j} \in G. \]
But this is equivalent to the fact that $g^T$ lies in the kernel of the map ${\rm Hom}(G^{\rm fin}_f, \CC^\ast) \longrightarrow {\rm Hom}(G, \CC^\ast)$.
\end{proof}

\begin{remark} As a corollary of Proposition~\ref{Prop:Gfin/G} we obtain 
\[  (G^T)^T =G \]
(cf.\ \cite[Lemma 3.3]{Kra}). In particular, $\{ 1 \}^T= G^{\rm fin}_f$.
\end{remark}

\section{The group $G_0^T$} \label{sect:g0}
We now consider the group $G_0^T$ where $G_0:=\langle g_0 \rangle$.

By \cite{Kra} we have 
\[
G_0^T =  \left\{ \prod_{i=1}^n \rho_i^{r_i} \, \left| \, \sum_{i=1}^n r_i q_i \in \ZZ \right\} \right. = {\rm SL}_n(\CC) \cap G^{\rm fin}_{f^T}.
\]

\begin{proposition} \label{Prop:ordJT}
We have
\[ |G_0^T | = c_f. \]
\end{proposition}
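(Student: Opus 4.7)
The plan is to use Proposition~\ref{Prop:Gfin/G} to translate the problem into counting elements of $G^{\rm fin}_f/G_0$, which reduces to computing two quantities: the order of the full symmetry group $G^{\rm fin}_f$ and the order of the exponential grading operator $g_0$.

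First, since Proposition~\ref{Prop:Gfin/G} gives $G_0^T \simeq \Hom(G^{\rm fin}_f/G_0,\CC^\ast)$ and the groups involved are finite abelian, I conclude
\[
|G_0^T| \;=\; \frac{|G^{\rm fin}_f|}{|G_0|}.
\]
So the task becomes computing the numerator and the denominator in terms of $E$ and $W_f$.

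Next, for the numerator, I would observe that $G^{\rm fin}_f$ is precisely the kernel of the group homomorphism $\varphi\colon (\CC^\ast)^n \to (\CC^\ast)^n$ defined by $(\lambda_1,\dots,\lambda_n)\mapsto\bigl(\prod_j\lambda_j^{E_{1j}},\dots,\prod_j\lambda_j^{E_{nj}}\bigr)$. On the cocharacter lattice $\ZZ^n$ this endomorphism is represented by the matrix $E$ (or its transpose, depending on convention), so since $\det E = d\neq 0$ the kernel is finite of order $|\det E| = d$. Thus $|G^{\rm fin}_f|=d$. For the denominator, the order of $g_0=({\bf e}[w_1/d],\dots,{\bf e}[w_n/d])$ is the smallest positive integer $k$ with $kw_i/d\in\ZZ$ for every $i$, which by the definition of $c_f$ equals $d/\gcd(w_1,\dots,w_n,d)=d/c_f$.

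Combining the two computations gives
\[
|G_0^T| \;=\; \frac{|G^{\rm fin}_f|}{|G_0|} \;=\; \frac{d}{d/c_f} \;=\; c_f,
\]
which is the desired equality. The only non-formal ingredient is the identity $|G^{\rm fin}_f|=|\det E|$; this is a standard statement about kernels of surjective torus endomorphisms, so I do not anticipate any genuine obstacle. Everything else is a direct unwinding of definitions.
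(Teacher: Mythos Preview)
Your proof is correct and follows essentially the same approach as the paper: both use Proposition~\ref{Prop:Gfin/G} to reduce to computing $|G^{\rm fin}_f|/|G_0|$, then identify $|G_0|=d/c_f$ and $|G^{\rm fin}_f|=\det E=d$. The only cosmetic difference is that the paper cites \cite{Kre} for the identity $|G^{\rm fin}_f|=\det E$, whereas you supply the standard torus-endomorphism argument directly.
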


\begin{proof}
The group $G_0$ has order $d/c_f$.
By \cite{Kre} we have 
\[ |G^{\rm fin}_{f^T}| = |G^{\rm fin}_f| = d = \det E. \]
Therefore the statement follows from Proposition~\ref{Prop:Gfin/G}.
\end{proof}

According to \cite{KS}, an invertible polynomial $f$ is a (Thom-Sebastiani) sum of invertible polynomials (in groups of different variables) of the following types:
\begin{enumerate}
\item[1)] $x_1^{a_1}$ (Fermat type);
\item[2)] $x_1^{a_1}x_2 + x_2^{a_2}x_3 + \ldots + x_{m-1}^{a_{m-1}}x_m + x_m^{a_m}$ (chain type; $m\ge 2$);
\item[3)] $x_1^{a_1}x_2 + x_2^{a_2}x_3 + \ldots + x_{m-1}^{a_{m-1}}x_m + x_m^{a_m}x_1$ (loop type; $m\ge 2$).
\end{enumerate}

\begin{proposition} \label{Prop:genJT}
\begin{itemize}
\item[{\rm (i)}] Let $f(x_1, \ldots, x_n)=x_1^{a_1}x_2 + x_2^{a_2}x_3 + \ldots + x_{n-1}^{a_{n-1}}x_n + x_n^{a_n}$ be of chain type. Then $G_0^T$ is a cyclic group of order $c_f$ which is generated by $\rho_n^{d/c_f}$. This element acts on the first coordinate $x_1$ by multiplying it by ${\bf e}[1/c_f]$. 
\item[{\rm (ii)}] Let $f(x_1, \ldots, x_n)=x_1^{a_1}x_2 + x_2^{a_2}x_3 + \ldots + x_{n-1}^{a_{n-1}}x_n + x_n^{a_n}x_1$ be of loop type. Then $G_0^T$ is a cyclic group of order $c_f$ which is generated by $\rho_i^{d/c_f}$ for any $i=1, \ldots , n$. Here $\rho_i^{d/c_f}$ acts on the $i$-th coordinate $x_i$ by multiplying it by ${\bf e}[1/c_f]$. 
\end{itemize}
\end{proposition}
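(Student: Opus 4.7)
My strategy is to show that $G^{\rm fin}_{f^T}$ is itself cyclic of order $d$ in both cases. Combined with Proposition~\ref{Prop:ordJT} ($|G_0^T|=c_f$), this forces $G_0^T$ to coincide with the unique subgroup of order $c_f$ inside the cyclic group $G^{\rm fin}_{f^T}$, namely $\{g\in G^{\rm fin}_{f^T} : g^{c_f}=1\}$. If $\rho$ is any generator of $G^{\rm fin}_{f^T}$, then $\rho^{d/c_f}$ generates $G_0^T$, and its action on each coordinate is read off from the corresponding entry of $E^{-1}$. The containment $\rho_i^{d/c_f}\in G_0^T$ is automatic for every $i$: the defining congruence $(d/c_f)q_i=w_i/c_f\in\ZZ$ follows from $c_f\mid w_i$.

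To exhibit cyclicity I would find an entry of some row $\rho_i$ of $E^{-1}$ whose denominator, in lowest terms, equals $d$; this forces $\rho_i$ to have order at least $d$ and hence exactly $d$ (using $|G^{\rm fin}_{f^T}|=d$ from \cite{Kre}), so $\rho_i$ generates. In case (i), $E$ is upper bidiagonal, so the $(n,1)$-minor of $E$ is lower bidiagonal with $1$'s on the diagonal and $a_2,\dots,a_{n-1}$ below; its determinant is $1$, giving $(E^{-1})_{1,n}=(-1)^{n+1}/d$ in lowest terms. Thus $\rho_1$ generates $G^{\rm fin}_{f^T}$, and a suitable power of $\rho_1$ generates $G_0^T$.

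In case (ii), deleting the $i$-th row and column from the loop matrix $E$ yields a triangular $(n-1)\times(n-1)$ block whose diagonal entries are the $a_j$ for $j\neq i$ (in some cyclic order), so its determinant is $\prod_{j\neq i}a_j$. Hence $(E^{-1})_{i,i}=\prod_{j\neq i}a_j/d$. From $d=\prod_j a_j+(-1)^{n-1}$ one gets $\gcd(\prod_j a_j,d)=1$, and therefore $\gcd(\prod_{j\neq i}a_j,d)=1$; the denominator is exactly $d$, so every $\rho_i$ generates $G^{\rm fin}_{f^T}$ and each $\rho_i^{d/c_f}$ generates $G_0^T$.

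The main obstacle I anticipate is pinning down the precise root of unity acting on the designated coordinate. In case (ii), for example, $\rho_i^{d/c_f}$ multiplies $x_i$ by ${\bf e}\!\left[\prod_{j\neq i}a_j/c_f\right]$, and matching this with ${\bf e}[1/c_f]$ reduces to the congruence $\prod_{j\neq i}a_j\equiv 1\pmod{c_f}$. I would extract this from $c_f\mid w_j$ for all $j$ together with the explicit alternating-sum expressions for the $w_j$ obtained by unwinding the recursion $a_jq_j+q_{j+1}=1$ and its loop closure (and analogously for case~(i)); this modular-arithmetic bookkeeping is the most delicate step.
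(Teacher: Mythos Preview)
Your approach is exactly the one the paper takes: the paper's entire proof is to observe that $f^T$ is again of chain or loop type and then to cite \cite{Kre} for the fact that $G^{\rm fin}_{f^T}$ is cyclic of order $d$ with explicit generators; combined with Proposition~\ref{Prop:ordJT}, this forces $G_0^T$ to be the unique subgroup of order $c_f$. Your cofactor computations simply supply the details that the citation to \cite{Kre} is meant to cover, and your loop-case argument ($\gcd(\prod_{j\neq i}a_j,d)=1$ because $d=\prod_j a_j\pm 1$) is correct and clean.

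One point worth flagging. In case (i) your computation shows that $\rho_1$ (not $\rho_n$) generates $G^{\rm fin}_{f^T}$, via $(E^{-1})_{1,n}=\pm 1/d$. This is not an oversight on your part: with the chain written as in the proposition (last monomial pure, so $E$ upper bidiagonal), one has $\rho_n=(0,\dots,0,1/a_n)$, whence $\rho_n^{d/c_f}$ acts on $x_n$ by ${\bf e}[w_n/c_f]=1$ and is the identity. So the literal claim about $\rho_n$ cannot hold in that convention; the intended statement matches the chain ordering used later in Type~IV (first monomial pure, $E$ lower bidiagonal), where indeed $(E^{-1})_{n,1}=\pm 1/d$ and $\rho_n^{d/c_f}$ does the job. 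Your argument proves the correct version either way.

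For the ``delicate'' congruence at the end: you do not actually need $\prod_{j\neq i}a_j\equiv 1\pmod{c_f}$. What is used downstream (e.g.\ in the proof of Theorem~\ref{thm:strange}) is only that the generator acts on the designated coordinate by a \emph{primitive} $c_f$-th root of unity, and this follows immediately from $\gcd(\prod_{j\neq i}a_j,d)=1$ together with $c_f\mid d$. So you can sidestep the modular bookkeeping entirely.
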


\begin{proof} Note that if $f$ is of chain or loop type then $f^T$ is of the same type. The statement then follows from \cite{Kre} where it shown that $G^{\rm fin}_f$ is a cyclic group in these cases and generators for this group are given.
\end{proof}

Let $G \subset {\rm GL}_n(\CC)$ be a finite subgroup and $g \in G$ an element. The age of $g$ is defined as follows \cite{IR}: Let $r$ be the order of $g$. Choose a basis of eigenvectors diagonalizing the matrix $g$ , giving
\[ g={\rm diag}({\bf e}[a_1/r], \ldots, {\bf e}[a_n/r]) \quad \mbox{with } 0 \leq a_i < r. \]
The element $g$ is written $g=\frac{1}{r}(a_1, \ldots, a_n)$. We define the {\em age} of $g$ to be the rational number
\[ {\rm age}(g) := \frac{1}{r}\sum_{i=1}^n a_i. \] 
If $g \in {\rm SL}_n(\CC)$, then this number is an integer. The elements of age 1 are also called {\em junior elements}.

\begin{remark}
For the exponential grading operator $g_0 \in G^{\rm fin}_f$ we have ${\rm age}(g_0)=\sum_{i=1}^n q_i$.
\end{remark}

For $g \in G$, we denote by ${\rm Fix}\, g$ the fixed locus of $g$.

\begin{definition} Let $G \subset {\rm GL}_n(\CC)$ be a finite subgroup. The number of elements  $g \in G$ of age 1 with ${\rm Fix}\, g = \{ 0 \}$ is denoted by $j_G$.
\end{definition}

\begin{remark} \label{rem:fix}
Let $G \subset {\rm SL}_3(\CC)$ be a finite subgroup.
The number of $g \in G$ with ${\rm Fix}\, g = \{ 0 \}$ is equal to $2j_G$. For, if $g=\frac{1}{r}(a_1,a_2,a_3)$ is an element of age 1 with $0<a_i <1$, $i=1,2,3$, then $g^{-1}=\frac{1}{r}(1-a_1,1-a_2,1-a_3)$ is an element of age 2.
\end{remark}

Let $f(x_1, \ldots , x_n)$ be an invertible polynomial. Let $\Omega^p(\CC^n)$ be the algebra of regular $p$-forms on $\CC^n$. Consider the algebra $\Omega_f:= \Omega^n(\CC^n)/(df \wedge \Omega^{n-1}(\CC^n))$. This algebra is $\QQ$-graded  with respect to the system of (fractional) weights $(q_1, \ldots, q_n)$. Let $\Omega_{f,1}$ be the subspace of $\Omega_f$ of elements of degree 1. A subgroup $G \subset G^{\rm fin}_f$ induces an action on $\Omega_{f,1}$ via its action on the coordinates. We denote by $(\Omega_{f,1})^G$ the subspace of invariants under this group action.

\begin{proposition} \label{Prop:g=j}
Let $f(x_1, \ldots , x_n)$, $n \geq 3$, be an invertible polynomial and $G \subset G^{\rm fin}_f$ be a subgroup with $G_0 \subset G$. Then we have
\[ \dim_\CC (\Omega_{f,1})^G = j_{G^T}. \]
\end{proposition}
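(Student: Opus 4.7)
The strategy is to decompose $\Omega_{f,1}$ into isotypic components for the $G^{\rm fin}_f$-action and apply Proposition~\ref{Prop:Gfin/G}. Since $G^{\rm fin}_f$ is a finite abelian group acting semisimply on $\Omega_f$ through characters, we have $\Omega_{f,1}=\bigoplus_\chi (\Omega_{f,1})_\chi$, indexed by characters $\chi$ of $G^{\rm fin}_f$. By Proposition~\ref{Prop:Gfin/G}, the characters of $G^{\rm fin}_f$ trivial on $G$ are in bijection with elements of $G^T\subset G^{\rm fin}_{f^T}$; write $h_\chi\in G^T$ for the element corresponding to $\chi$. Then
\[
\dim_\CC (\Omega_{f,1})^G = \sum_{h\in G^T} \dim_\CC(\Omega_{f,1})_{\chi_h},
\]
and the proposition will follow once we establish the character-refined identity
\[
\dim_\CC(\Omega_{f,1})_{\chi_h} = \begin{cases} 1 & \text{if } {\rm age}(h)=1 \text{ and } {\rm Fix}\, h = \{0\},\\ 0 & \text{otherwise.}\end{cases}
\]

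Next I would link characters with age and fixed locus as follows. Writing $h = \prod_i \rho_i^{r_i}$ with integer representatives $r_i$, one has $h=({\bf e}[c_1],\ldots,{\bf e}[c_n])$ where $c_j=\sum_i r_i a_{ij}$, and the definition of $G^T$ shows that $\chi_h$ sends $g=({\bf e}[b_1],\ldots,{\bf e}[b_n])\in G^{\rm fin}_f$ to ${\bf e}[\sum_i r_i b_i]$. Hence $x^\alpha\,dx_1\wedge\cdots\wedge dx_n$ lies in the $\chi_h$-eigenspace precisely when $(r_1,\ldots,r_n)\equiv(\alpha_1+1,\ldots,\alpha_n+1)$ modulo the kernel $E^T\ZZ^n$ of $\ZZ^n\to G^{\rm fin}_{f^T}$; and when so,
\[
\sum_j c_j \;=\; \sum_i (\alpha_i+1) q_i \;=\; \deg(x^\alpha\,dx_1\wedge\cdots\wedge dx_n)
\]
via the identity $E^{-1}(1,\ldots,1)^T=(q_1,\ldots,q_n)^T$. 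Choosing representatives $c_j\in[0,1)$, the monomial has degree $1$ iff ${\rm age}(h)=\sum_j c_j = 1$; and $c_j>0$ for every $j$ — equivalently ${\rm Fix}\, h = \{0\}$ — is exactly the condition that $x^\alpha$ represents a nonzero element in a standard monomial basis of ${\rm Jac}(f)$.

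To complete the proof I would invoke the Kreuzer--Skarke classification (recalled before Proposition~\ref{Prop:genJT}): $f$ is a Thom--Sebastiani sum of invertible polynomials of Fermat, chain and loop type in disjoint variables. Both sides of the character-refined identity are multiplicative under such sums — via $\Omega_{f_1\oplus f_2}=\Omega_{f_1}\otimes \Omega_{f_2}$ as a $(G^{\rm fin}_{f_1}\times G^{\rm fin}_{f_2})$-module, the product decompositions of $G^{\rm fin}_f$ and $G^{\rm fin}_{f^T}$, additivity of ages and the product structure of fixed loci — so we reduce to the basic types. The Fermat case $x^a$ is immediate from the standard basis $\{1,x,\ldots,x^{a-2}\}$ of ${\rm Jac}(x^a)$. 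The hard part will be the chain and loop cases, where the Jacobian ideal has mixed (non-monomial) generators: one has to exhibit an explicit monomial basis of ${\rm Jac}(f)$ (following Kreuzer) and verify by direct combinatorial bookkeeping that the monomials killed by the relations are exactly those mapping to $h_\alpha$ with nontrivial fixed locus. This amounts to the identification of the narrow sector states in Landau--Ginzburg/FJRW mirror symmetry; carrying it out cleanly, especially in the loop case, is the delicate step.
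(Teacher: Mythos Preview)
Your framing via the isotypic decomposition of $\Omega_{f,1}$ under $G^{\rm fin}_f$ and the identification ${\rm Hom}(G^{\rm fin}_f,\CC^\ast)\simeq G^{\rm fin}_{f^T}$ is exactly the mechanism the paper uses, and the monomial--to--group-element map $x_1^{r_1}\cdots x_n^{r_n}\,dx\mapsto \prod_i\rho_i^{r_i}\cdot g_0^T$ is the same. So the core idea is right; the differences are in how the reduction and the atomic verification are carried out.

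First, your Thom--Sebastiani multiplicativity step is not quite as stated. The character-refined identity concerns the degree-$1$ piece, and under $f=f_1\oplus f_2$ one has
\[
(\Omega_{f,1})_{\chi_{(h_1,h_2)}}\;=\;\bigoplus_{d_1+d_2=1}(\Omega_{f_1,d_1})_{\chi_{h_1}}\otimes(\Omega_{f_2,d_2})_{\chi_{h_2}},
\]
so to reduce you actually need, for each atomic factor and \emph{every} degree $d$, that $\dim(\Omega_{f_i,d})_{\chi_{h_i}}$ equals $1$ when $d={\rm age}(h_i)$ and ${\rm Fix}\,h_i=\{0\}$, and $0$ otherwise. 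That is strictly stronger than the degree-$1$ statement; it is the full Kreuzer--Krawitz narrow-sector count for chain and loop types. It can be done, but it is more than what the proposition needs, and it is precisely the ``delicate step'' you flag.

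Second, the paper avoids this by working directly with the global polynomial (using $n\ge 3$) rather than factoring. Starting from a $G$-invariant monomial form of degree $1$, one sets $u_j:=\sum_i(r_i+1)a_{ij}$, so that $(r_1+1,\ldots,r_n+1)=(u_1,\ldots,u_n)E$ and $\sum_j u_j=1$. The whole content of the chain/loop case is then the elementary verification that these relations together with $r_i\ge 0$ force $0<u_j<1$ for all $j$, hence ${\rm Fix}\,h=\{0\}$ and ${\rm age}(h)=1$. For chain type the equations $a_1u_1=r_1+1$, $u_{i-1}+a_iu_i=r_i+1$ give this by a short induction; loop type is similar. No explicit monomial basis of ${\rm Jac}(f)$ is required, and the argument is a few lines rather than the full combinatorics you anticipate. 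I would recommend replacing the multiplicativity reduction by this direct computation; it is both shorter and proves exactly what is needed.
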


\begin{proof} Let $g=\prod_{i=1}^n \rho_i^{r_i}$ be an element of $G_0^T$. Then $g$ acts on $\CC^n$ by
\[ g= {\rm diag} \left({\bf e}\left[\sum_{i=1}^n r_ia_{i1}\right], \ldots , {\bf e}\left[\sum_{i=1}^n r_ia_{in}\right]\right). \]
Assume that $0 \leq \sum_{i=1}^n r_ia_{ij} <1$ for all $j=1, \ldots, n$. Then the age of $g$ is given by the formula
\begin{equation}
 {\rm age}(g) = (r_1, \ldots , r_n) E^{-1} \begin{pmatrix} 1 \\ \vdots \\ 1 \end{pmatrix} = \sum_{i=1}^n r_i q_i.  \label{Eq:age}
\end{equation}

There is a mapping ${\rm Hom}(G, \CC^\ast) \simeq G^T \to \Omega_f$ defined as follows. Let $g=\prod_{i=1}^n \rho_i^{r_i} \cdot g_0^T \in {\rm Hom}(G, \CC^\ast)$ be represented in the form that $0 \leq \sum_{i=1}^n r_ia_{ij} <2$ for all $j=1, \ldots, n$. Here $g_0^T$ is the exponential grading operator of $f^T$. Then we define
\[ \rho_1^{r_1} \cdots \rho_n^{r_n} \cdot g_0^T \mapsto x_1^{r_1} \cdots x_n^{r_n}dx_1 \wedge \cdots \wedge dx_n. \]
We show that this mapping induces a one-to-one correspondence between elements of age 1 of $G^T$ which only fix the origin  and elements in $(\Omega_{f,1})^G$ represented by such monomial $n$-forms of weighted degree  equal to 1.

Let $g=\prod_{i=1}^n \rho_i^{r_i} \cdot g_0^T \in {\rm Hom}(G, \CC^\ast)$ be an element of age 1 which only fixes the origin. Then, by Equation~(\ref{Eq:age}), $\sum_{i=1}^n (r_i+1) q_i=1$. This means that the degree of the form $x_1^{r_1} \cdots x_n^{r_n}dx_1 \wedge \cdots \wedge dx_n$ is equal to 1. By definition of $G^T$,
\[ (r_1+1, \ldots, r_n+1) E^{-1} \begin{pmatrix} s_1 \\ \vdots \\ s_n \end{pmatrix} \in \ZZ \mbox{ for all } \prod_{j=1}^n \sigma_j^{s_j} \in G. \]
Therefore the form $x_1^{r_1} \cdots x_n^{r_n}dx_1 \wedge \cdots \wedge dx_n$ is left invariant by $G$.

Conversely, let $x_1^{r_1} \cdots x_n^{r_n}dx_1 \wedge \cdots \wedge dx_n$ be a monomial $n$-form of degree  1 which is left invariant by $G$ and define $u_j:=\sum_{i=1}^n (r_i+1)a_{ij}$ for $j=1, \ldots , n$. Then $\sum_{j=1}^n u_j=1$. 

We claim that $0 < u_i < 1$ for all $i=1, \ldots, n$. We have
\begin{equation}
(r_1+1, \ldots, r_n+1) = (r_1+1, \ldots, r_n+1)E^{-1}E =(u_1, \ldots, u_n)E. \label{Eq:u}
\end{equation}
The polynomial $f(x_1, \ldots , x_n)$ is a (Thom-Sebastiani) sum of invertible polynomials (in groups of different variables) of Fermat, chain, or loop type. It suffices to prove the claim if $f(x_1, \ldots , x_n)$ is of Brieskorn-Pham type (i.e.\ a sum of Fermat monomials) or of chain or loop type. 

Let  $f(x_1, \ldots , x_n)$ be of Brieskorn-Pham type
\[f(x_1, \ldots , x_n) = x_1^{a_1} + \cdots + x_n^{a_n}, \quad n \geq 3. \]
Then it follows from Equation~(\ref{Eq:u}) that $u_i=\frac{r_i+1}{a_i}$.
Therefore the claim follows since $\sum_{i=1}^n u_i=1$  implies that $r_i+1 < a_i$  in view of  $r_i+1 > 0$ for all $i=1, \ldots , n$.

Let $f(x_1, \ldots , x_n)$ be of chain type
\[ f(x_1, \ldots , x_n)=x_1^{a_1}x_2 + x_2^{a_2}x_3 + \ldots + x_{n-1}^{a_{n-1}}x_n + x_n^{a_n}. \]
Then Equation~(\ref{Eq:u}) gives $a_1 u_1=r_1+1$ and $u_{i-1} + a_i u_i=r_i+1$ for $i=2, \ldots , n$. Again it follows that  $0 < r_i+1 < a_i$ for all $i=1, \ldots, n$. Therefore $0 < u_1 <1$ and it follows inductively that $0 < u_i <1$ also for $i=2, \ldots, n$.

Finally, let $f(x_1, \ldots , x_n)$ be of loop type
\[ f(x_1, \ldots , x_n)=x_1^{a_1}x_2 + x_2^{a_2}x_3 + \ldots + x_{n-1}^{a_{n-1}}x_n + x_n^{a_n}x_1.\]
Then Equation~(\ref{Eq:u}) gives $u_n+a_1 u_1=r_1+1$ and $u_{i-1} + a_i u_i=r_i+1$ for $i=2, \ldots , n$. 
Again we have $0 < r_i +1 < a_i$ for all $i=1, \ldots, n$. From this we can again derive that $0< u_i <1$ for all $i=1, \ldots , n$. This proves our claim.

Therefore we can apply  Formula~(\ref{Eq:age}) to the element $g=\prod_{i=1}^n \rho_i^{r_i} \cdot g_0^T$ and see that $g$ is an element of age 1 whose fixed locus is only the origin. This defines a mapping 
\[(\Omega_{f,1})^G \to \{ g \in {\rm Hom}(G, \CC^\ast) \, | \, {\rm age}(g)=1, \ {\rm Fix}\, g = \{ 0 \} \} 
\] 
which is inverse to the above one. This proves the proposition.
\end{proof}

\section{Dolgachev numbers for orbifold curves} \label{sect:Dol}
Let $f(x,y,z)$ be an invertible polynomial  and $G \subset G^{\rm fin}_f$ be a subgroup with $G_0 \subset G$. Let $\widetilde{G}$ be the subgroup of $G_f$ 
defined by the commutative diagram of the short exact sequences
\[ 
\xymatrix{
\{ 1 \}\ar[r] & \CC^\ast \ar[r]\ar@{=}[d] & \widetilde{G} \ar[r]\ar@{^{(}->}[d] & G \ar[r]\ar@{^{(}->}[d] & \{ 1 \}\\
\{ 1 \}\ar[r] & \CC^\ast \ar[r]& G_f \ar[r] & G^{\rm fin}_f \ar[r] & \{ 1 \}
}.
\]
We can naturally associate to the pair $(f,G)$  
the following quotient stack:
\begin{equation}
{\mathcal C}_{(f,G)}:=\left[f^{-1}(0)\backslash\{0\}\left/\widetilde{G} \right.\right]
\end{equation}
Since $f$ has an isolated singularity only at the origin $0\in\CC^3$ 
and $G$ is an extension of a one dimensional torus $\CC^\ast$ 
by a finite abelian group, the stack ${\mathcal C}_{(f,G)}$ is a Deligne--Mumford stack 
and may be regarded as a smooth projective curve 
with a finite number of isotropic points on it. Let $C_{(f,G)} = [{\mathcal C}_{(f,G)}]$ be the underlying smooth projective curve and $g_{(f,G)}$ its genus.

\begin{proposition} \label{prop:gjac}
We have
\[ g_{(f,G)} = \dim_\CC (\Omega_{f,1})^G. \]
\end{proposition}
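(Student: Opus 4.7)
My plan is to identify the space of global holomorphic $1$-forms on the underlying smooth projective curve $C_{(f,G)}$ with $(\Omega_{f,1})^G$ via the Poincar\'e residue, and then use $g_{(f,G)} = \dim_\CC H^0(C_{(f,G)}, \omega_{C_{(f,G)}})$. The argument splits into two stages: first handle the case $G = G_0$, then pass to general $G$ by finite-group invariants.

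For $G = G_0$, the orbifold ${\mathcal C}_{(f,G_0)} = [f^{-1}(0)\setminus\{0\}/\CC^\ast]$ is a quasismooth hypersurface of degree $d$ in the weighted projective space $\PP(w_1,w_2,w_3)$ cut out by $f$. The Poincar\'e/Griffiths residue
\[
h(x,y,z)\, dx \wedge dy \wedge dz \;\longmapsto\; \mathrm{Res}\!\left(\frac{h(x,y,z)\, dx \wedge dy \wedge dz}{f}\right)
\]
identifies $\Omega_{f,1}$ with $H^0$ of the orbifold dualising sheaf of ${\mathcal C}_{(f,G_0)}$: the weighted-degree-$1$ condition is precisely $\CC^\ast$-invariance of the residue on $\{f=0\}$, while forms in $df \wedge \Omega^2$ have vanishing residue on the hypersurface. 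Since at each isotropy point the fractional orbifold divisor has coefficients in $[0,1)$, its round-down is zero, so the orbifold $H^0$ coincides with the coarse $H^0(C_{(f,G_0)}, \omega_{C_{(f,G_0)}})$, which equals $g_{(f,G_0)}$.

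For the general case, set $H := G/G_0$. The smooth projective curve $C_{(f,G)}$ is the finite quotient $C_{(f,G_0)}/H$, and I would invoke the standard identification
\[
H^0(C_{(f,G_0)}/H, \omega) \;=\; H^0(C_{(f,G_0)}, \omega)^H,
\]
which holds because an $H$-invariant holomorphic $1$-form on the cover automatically vanishes to order $e_P-1$ at any ramification point of index $e_P$ and hence descends. Moreover $G_0$ acts trivially on $\Omega_{f,1}$: the generator $g_0=({\bf e}[q_1],{\bf e}[q_2],{\bf e}[q_3])$ multiplies the class of $x^{r_1}y^{r_2}z^{r_3}\, dx\wedge dy\wedge dz$ by ${\bf e}\bigl[\sum_{i=1}^3(r_i+1)q_i\bigr]={\bf e}[1]=1$, so $(\Omega_{f,1})^H = (\Omega_{f,1})^G$. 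Combining these steps yields the proposition.

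The main obstacle is making the residue isomorphism precise in the weighted/orbifold setting: the classical Griffiths theorem is stated for smooth projective hypersurfaces, and one needs its extension to quasismooth hypersurfaces in weighted projective space (going back to Dolgachev and Steenbrink), together with the observation that the fractional divisor at the cyclic isotropy points contributes nothing to $H^0$ of the canonical sheaf. Once this identification is established, the remaining two steps (finite-group descent and triviality of the $G_0$-action) are routine.
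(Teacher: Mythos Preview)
Your argument is correct and follows essentially the same route as the paper. The paper's own proof is extremely terse: it observes that $\Omega_f \simeq {\rm Jac}(f)\,dx\wedge dy\wedge dz$, that elements of $\Omega_{f,1}$ correspond to monomials of weighted degree $1-\sum_i q_i$, and then cites Dolgachev \cite[3.4.4, 4.4.5]{Dolgachev82} for the identification with the genus. This is exactly the Griffiths--Dolgachev residue identification you invoke. Your decomposition into the $G=G_0$ step followed by descent along the finite quotient $H=G/G_0$, together with the observation that $G_0$ acts trivially on $\Omega_{f,1}$, makes explicit what the paper leaves packed inside the Dolgachev reference, but the underlying mechanism is the same.
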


\begin{proof} We have
\[ \Omega_f = \Omega^3(\CC^3)/(df \wedge \Omega^2(\CC^3)) \simeq {\rm Jac}(f) dx \wedge dy \wedge dz. \]
If $x^{r_1} y^{r_2} z^{r_3}dx \wedge dy \wedge dz$ represents an element of $\Omega_{f,1}$, then the (fractional) degree of the monomial $x^{r_1} y^{r_2} z^{r_3}$ is $1- \sum_{i=1}^3 q_i$. Therefore the proposition follows from \cite[3.4.4, 4.4.5]{Dolgachev82}.
\end{proof}

\begin{definition}
The {\em Dolgachev numbers} $\alpha_1, \ldots, \alpha_r$ of the pair $(f, G)$  are the orders of the isotropy groups of $G$ at the isotropic points of ${\mathcal C}_{(f,G)}$. We denote them by $A_{(f,G)}$.
\end{definition}

For an orbifold ${\mathcal X}$, denote by $H^{p,q}_{\rm st}({\mathcal X})$ ($p,q\in\QQ_{\geq 0}$) 
its $p$-th orbifold Dolbeault cohomology group defined by \cite{CR}. 
Let $h_{\rm st}^{p,q}({\mathcal X})= \dim_\CC H^{p,q}_{\rm st}({\mathcal X})$ be its $(p,q)$-th 
orbifold Hodge number (see also \cite{BD}).

\begin{definition}
The {\em stringy Euler number} of the orbifold curve ${\mathcal C}_{(f,G)}$ is defined by
\[ e_{\rm st}({\mathcal C}_{(f,G)}) := \sum_{p,q\in \QQ_{\geq 0}} (-1)^{p-q} 
h_{\rm st}^{p,q}({\mathcal C}_{(f,G)}). \]
\end{definition}

\begin{proposition} \label{cor:chiDol}
For the stringy Euler number of ${\mathcal C}_{(f,G)}$ we have
\[ e_{\rm st}({\mathcal C}_{(f,G)}) = 2-2g_{(f,G)} +\sum_{i=1}^r (\alpha_i-1). \]
\end{proposition}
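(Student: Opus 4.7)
My plan is to compute $e_{\rm st}({\mathcal C}_{(f,G)})$ by reducing the alternating sum of orbifold Hodge numbers to an Euler-characteristic computation on the inertia stack. The Chen--Ruan decomposition writes $H^{p,q}_{\rm st}({\mathcal X})$ as a direct sum, indexed by conjugacy classes $[g]$ of isotropy elements, of contributions $H^{p-{\rm age}(g),\, q-{\rm age}(g)}(X^g/C(g))$. Because the age shifts $p$ and $q$ by the same rational number, the sign $(-1)^{p-q}$ is unchanged, and the alternating sum collapses to
\[
e_{\rm st}({\mathcal C}_{(f,G)}) = \sum_{[g]} \chi\bigl(X^g/C(g)\bigr),
\]
where $X = f^{-1}(0)\backslash\{0\}$ and $\chi$ denotes ordinary topological Euler characteristic. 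Thus I reduce the problem to enumerating the components of the inertia stack of ${\mathcal C}_{(f,G)}$ and computing the Euler characteristic of each.

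I then enumerate these sectors. The identity sector contributes the entire orbifold, whose coarse moduli is the smooth projective curve $C_{(f,G)}$ of genus $g_{(f,G)}$, yielding $\chi(C_{(f,G)}) = 2 - 2g_{(f,G)}$. For any non-identity element $g$ in the isotropy group $G_{p_i}\cong\ZZ/\alpha_i\ZZ$ of the $i$-th isotropic point $p_i$, the fixed locus in ${\mathcal C}_{(f,G)}$ reduces to the single reduced point $\{p_i\}$: the generic stabilizer of ${\mathcal C}_{(f,G)}$ is trivial, and the isotropy groups at distinct isotropic points are mutually distinct. Each of the $\alpha_i - 1$ non-identity elements of $G_{p_i}$ therefore produces a twisted sector whose coarse moduli is a point, contributing $1$ to the Euler characteristic.

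Summing over all sectors then gives
\[
e_{\rm st}({\mathcal C}_{(f,G)}) = (2 - 2g_{(f,G)}) + \sum_{i=1}^r (\alpha_i - 1),
\]
which is the desired formula.

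The one place where something could go subtly wrong, and hence the main step to verify with care, is the identification of the inertia stack. Specifically I would need to confirm that the isotropy group at $p_i$ really has order exactly $\alpha_i$ (this is essentially the definition of the Dolgachev numbers used above) and that the $\CC^\ast$-part of the extension $\widetilde{G}$ contributes no additional twisted sectors, which follows from the fact that $\CC^\ast$ acts freely on $f^{-1}(0)\backslash\{0\}$ modulo the finite isotropy. Once these geometric points are settled, the proposition is an immediate consequence of the sector-by-sector tally above.
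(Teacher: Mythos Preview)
Your proposal is correct and follows essentially the same approach as the paper: both arguments invoke the Chen--Ruan decomposition of orbifold cohomology into twisted sectors and use that each isotropic point $p_i$ contributes exactly $\alpha_i-1$ twisted sectors, each a single point. The paper's proof is a one-line citation of \cite{CR} (specifically Example~5.3 there), while you have spelled out explicitly why the age shifts cancel in the alternating sum and why the inertia stack has the claimed components; your added remarks about the $\CC^\ast$-part of $\widetilde{G}$ not producing extra sectors are correct and worth noting, though the paper absorbs this into the observation that ${\mathcal C}_{(f,G)}$ is already a Deligne--Mumford stack.
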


\begin{proof} This follows from \cite{CR} since $\alpha_i-1$ is the number of twisted sectors around the corresponding isotropic point (see, in particular, \cite[Example 5.3]{CR}).
\end{proof}

We consider the pair $(f,G_f^{\rm fin})$. By \cite[Theorem~3]{ET},  the quotient stack ${\mathcal C}_{(f,G^{\rm fin}_f)}$ is a smooth projective line $\PP^1$ with at most three isotropic points $p_1, p_2 , p_3$  of orders $\alpha_1', \alpha_2', \alpha_3'$ respectively (we allow that some of the numbers $\alpha_i'$ might be equal to one). The numbers $A_{(f,G_f^{\rm fin})} = (\alpha_1', \alpha_2', \alpha_3')$ are the Dolgachev numbers of the pair $(f,G_f^{\rm fin})$.
For positive integers $u$ and $v$, by $u \ast v$ we denote $v$ copies of the integer $u$.

\begin{theorem} \label{thm:Dol}
Let $G \subset H_i \subset G^{\rm fin}_f$ be the minimal subgroup containing the isotropy group of the point $p_i$, $i=1,2,3$. Then we have the following formula for the Dolgachev numbers $\alpha_1, \ldots, \alpha_r$ of the pair $(f,G)$:
\[ (\alpha_1, \ldots, \alpha_r) = \left( \frac{\alpha'_i}{|H_i/G|} \ast |G^{\rm fin}_f/H_i|, i=1,2,3 \right) ,\]
where we omit numbers which are equal to one on the right-hand side.
\end{theorem}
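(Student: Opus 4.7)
The plan is to analyse the covering map of stacks
\[
\pi : {\mathcal C}_{(f,G)} \longrightarrow {\mathcal C}_{(f,G^{\rm fin}_f)}
\]
induced by the inclusion $\widetilde{G}\hookrightarrow G_f$, which realises ${\mathcal C}_{(f,G^{\rm fin}_f)}$ as the quotient of ${\mathcal C}_{(f,G)}$ by the residual action of the finite abelian group $G^{\rm fin}_f/G$. Because the weighted $\CC^\ast$-action fixes only the origin of $\CC^3$, for each $q\in f^{-1}(0)\setminus\{0\}$ the stabilizer of $q$ in $\widetilde{G}$ (respectively in $G_f$) intersects $\CC^\ast$ trivially and therefore projects isomorphically onto a subgroup of $G$ (respectively of $G^{\rm fin}_f$). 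Fixing a lift $q_i$ of $p_i$ and writing $K_i\subset G^{\rm fin}_f$ for the image of its stabilizer in $G_f$, one has $|K_i|=\alpha_i'$; since $G_f$ is abelian every point of ${\mathcal C}_{(f,G)}$ in the fibre over $p_i$ carries the same isotropy group $K_i\cap G$, of order
\[
|K_i\cap G| \;=\; \frac{|K_i|}{|K_i G / G|} \;=\; \frac{\alpha_i'}{|H_i/G|},
\]
where I have used the minimality condition $H_i=G\cdot K_i$.

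Next I would count the preimages of $p_i$ under $\pi$. The $G^{\rm fin}_f$-orbit of the image of $q_i$ in the weighted projective curve $(f^{-1}(0)\setminus\{0\})/\CC^\ast$ has $|G^{\rm fin}_f/K_i|$ elements, and its $G$-orbits are parametrized by the double coset set $G\backslash G^{\rm fin}_f / K_i$, which in the abelian setting is $G^{\rm fin}_f/(G\cdot K_i)=G^{\rm fin}_f/H_i$. Hence $p_i$ has exactly $|G^{\rm fin}_f/H_i|$ preimages in ${\mathcal C}_{(f,G)}$, each, by the previous paragraph, with isotropy of order $\alpha_i'/|H_i/G|$. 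On the other hand, any point of ${\mathcal C}_{(f,G)}$ whose image under $\pi$ lies in the complement of $\{p_1,p_2,p_3\}$ has trivial stabilizer in $G_f$, and hence trivial stabilizer in $\widetilde{G}$, so it is a smooth point of the orbifold; therefore every isotropic point of ${\mathcal C}_{(f,G)}$ is accounted for in the manner just described.

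Collecting contributions from $p_1,p_2,p_3$ and discarding those preimages whose isotropy has order one yields exactly the formula stated. I expect the main obstacle to be the careful justification that $\pi$ really does have this quotient-of-stacks structure and, relatedly, the identification of the abstractly defined subgroup $H_i$ with the concrete product $G\cdot K_i$ that appears in the orbit analysis; once these are verified, the rest of the proof reduces to the elementary orbit-stabilizer computation sketched above.
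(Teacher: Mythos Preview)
Your proof is correct and follows essentially the same approach as the paper's: both analyze the quotient map ${\mathcal C}_{(f,G)}\to{\mathcal C}_{(f,G^{\rm fin}_f)}$ by the finite abelian group $G^{\rm fin}_f/G$. The paper presents this via a two-step factorization through the intermediate stack ${\mathcal C}_{(f,H_i)}$, so that the lower map (by $G^{\rm fin}_f/H_i$) is free over $p_i$ and yields the count $|G^{\rm fin}_f/H_i|$ of preimages, while the upper map (by $H_i/G$) is totally ramified over each of these and reduces the isotropy order to $\alpha_i'/|H_i/G|$; you instead carry out the equivalent orbit--stabilizer computation in one step, using double cosets and the second isomorphism theorem. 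Your identification $H_i=G\cdot K_i$ is exactly the correct reading of the minimality hypothesis, and the paper takes the quotient-of-stacks structure for granted, so the ``main obstacle'' you flag is not elaborated there either.
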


\begin{proof}
We have the following quotients of stacks
\[
\begin{CD}
{\mathcal C}_{(f,G)} \\
@VV{H_i/G}V \\
{\mathcal C}_{(f,H_i)} \\
@VV{G^{\rm fin}_f/H_i}V \\
{\mathcal C}_{(f,G^{\rm fin}_f)} 
\end{CD}
\]
Since $H_i$ is the minimal subgroup of $G^{\rm fin}_f$ containing the isotropy group of the point $p_i$, there are $|G^{\rm fin}_f/H_i|$ distinct isotropic points of ${\mathcal C}_{(f,H_i)}$ which are identified to the same point $p_i$ under the action of the group $G^{\rm fin}_f/H_i$. Each of them has the order 
$\frac{\alpha_i'}{|H_i/G|}$.
\end{proof}

Since $f$ is weighted homogeneous there are orbit invariants $A^{\CC^\ast}=(\alpha_1^{\CC^\ast}, \ldots , \alpha_s^{\CC^\ast})$ defined by the $\CC^\ast$-action. By \cite[\S 10]{s:2} these numbers can be computed from the weight system as follows. Let  $W_f=(w_1,w_2,w_3;d)$ be the canonical weight system of $f$. Let
\[ 
W_f^{\rm red} = (a_1,a_2,a_3;h):=\left(\frac{w_1}{c_f}, \frac{w_2}{c_f}, \frac{w_3}{c_f}; \frac{d}{c_f} \right)
\]
be the corresponding reduced weight system.
Then the numbers $(\alpha_1^{\CC^\ast}, \ldots , \alpha_s^{\CC^\ast})$ are the following integers: 
\begin{eqnarray*}
a_i & \mbox{if} & a_i \not| \ h, \quad i=1,2,3; \\
{\rm gcd}(a_i,a_j) \ast (m(a_i,a_j;h)-1) & \mbox{if} & {\rm gcd}(a_i,a_j) > 1, \quad 1 \leq i < j \leq 3,
\end{eqnarray*}
where
\[
m(a_i,a_j;h) := |\{ (k,\ell) \in \ZZ^2_{\geq 0} \, | \, ka_i+ \ell a_j =h \}|.
\]

\begin{theorem} \label{thm:strange}
The orbit invariants $(\alpha_1^{\CC^\ast}, \ldots , \alpha_s^{\CC^\ast})$ agree with the Dolgachev numbers of the pair $(f, G_0)$.
\end{theorem}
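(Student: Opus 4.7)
The plan is to identify the orbifold curve $\mathcal{C}_{(f,G_0)}$ with the quotient of $f^{-1}(0)\setminus\{0\}$ by the effective $\CC^\ast$-action coming from the reduced weight system $(a_1,a_2,a_3;h)$, so that the Dolgachev numbers and the orbit invariants $\alpha_i^{\CC^\ast}$ become two descriptions of the same list of stabilizer orders.

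The first step is to identify $\widetilde{G}_0$. The exponential grading operator $g_0=({\bf e}[w_1/d],{\bf e}[w_2/d],{\bf e}[w_3/d])$ is the value at $\lambda={\bf e}[1/d]$ of the one-parameter subgroup $T:=\{(\lambda^{w_1},\lambda^{w_2},\lambda^{w_3})\mid\lambda\in\CC^\ast\}\subset G_f$. Hence $G_0\subset T$, and since $\widetilde{G}_0$ is the subgroup of $G_f$ generated by the distinguished $\CC^\ast$ and $G_0$, we obtain $\widetilde{G}_0=T$. An arithmetic input needed here is the identity $c_f=\gcd(w_1,w_2,w_3)$: the relation $E\vec w=d\vec 1$ expresses $d$ as a nonnegative integer combination of the $w_i$, so $\gcd(w_1,w_2,w_3)\mid d$ and therefore $c_f=\gcd(w_1,w_2,w_3,d)=\gcd(w_1,w_2,w_3)$. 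The substitution $\mu=\lambda^{c_f}$ then reparametrizes $T$ as the faithful $\CC^\ast$-action $\mu\mapsto(\mu^{a_1},\mu^{a_2},\mu^{a_3})$ on $\CC^3$, which is exactly the standard $\CC^\ast$-action attached to the reduced weight system.

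The second step is to read off the isotropy in the quotient stack. For $p\in f^{-1}(0)\setminus\{0\}$ the stabilizer in $\widetilde{G}_0$ is $\{\mu\in\CC^\ast\mid\mu^{a_i}=1\text{ whenever }p_i\ne 0\}$, a cyclic group of order $\gcd\{a_i\mid p_i\ne 0\}$; these are exactly the stabilizers of the standard $\CC^\ast$-action. By construction the Dolgachev numbers of $(f,G_0)$ are the orders of the nontrivial such stabilizers, so as a multiset they coincide with the orbit invariants once the special orbits of the $\CC^\ast$-action are matched with the entries of the formula recalled above: axis orbits with $a_i\nmid h$ contribute $a_i$, the remaining special orbits lying on the coordinate planes contribute $\gcd(a_i,a_j)$ with multiplicity $m(a_i,a_j;h)-1$, and points with all three coordinates nonzero have trivial stabilizer because $\gcd(a_1,a_2,a_3)=1$.

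The main obstacle is the identification $\widetilde{G}_0=T$ and the recognition that its effective action has weights $(a_1,a_2,a_3)$; after that the theorem reduces to comparing two descriptions of the same stabilizers. A secondary issue is the precise counting of the coordinate-plane orbits, which is best handled by the Thom--Sebastiani decomposition from~\cite{KS} into Fermat, chain, and loop pieces so as to make sure every special orbit is accounted for exactly once.
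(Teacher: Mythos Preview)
Your argument is correct and takes a more direct route than the paper's. The paper proceeds case by case through the five types of invertible polynomials in Table~\ref{TabAppendix1}: for each type it evaluates Saito's formula for $A^{\CC^\ast}$ explicitly (using Lemma~\ref{lem:ka+lb=ab} to count the solutions of $ka_i+\ell a_j=h$), and separately computes $A_{(f,G_0)}$ via Theorem~\ref{thm:Dol} by determining the subgroups $G_0\subset H_i\subset G_f^{\rm fin}$ and the dual subgroups $K_i=H_i^T\subset G_0^T$ (invoking Propositions~\ref{Prop:ordJT}, \ref{Prop:genJT}, and \ref{prop:HT}), then checks that the two lists agree. You instead identify $\widetilde{G}_0$ with the connected torus $G_f^\circ$ (the image of $T$), note that it acts faithfully via the reduced weights $(a_1,a_2,a_3)$, and conclude that the Dolgachev numbers and the orbit invariants are, by their very definitions, the nontrivial stabilizer orders of one and the same action---so there is nothing further to check. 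This bypasses the classification altogether.

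The paper's longer computation has the side benefit of producing explicit closed formulas for the Dolgachev numbers in each type and of identifying the subgroups $K_i$ concretely; this information is reused downstream, for instance in assembling Table~\ref{phimon} and in the case-by-case verification inside the proof of Theorem~\ref{thm:Poincare}. Your final paragraph, gesturing at a Thom--Sebastiani recount of the special orbits, is not actually needed for the theorem as stated: Saito's formula is cited from \cite{s:2} rather than proved here, so once you have identified $\widetilde{G}_0$ with the reduced-weight $\CC^\ast$-action you are done.
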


For the proof, we shall use the classification of invertible polynomials in three variables, as given in Table~\ref{TabAppendix1} (see \cite{KS}, \cite{AGV85}).
\begin{table}[h]
\begin{center}
\begin{tabular}{|c||c||c|}
\hline
{\rm Type} & $f$ & $f^T$ \\
\hline
\hline
I (Fermat) & $x^{p_1}+y^{p_2}+z^{p_3}$ & $x^{p_1}+y^{p_2}+z^{p_3}$\\
&  $(p_1,p_2,p_3\in\ZZ_{\ge 2})$ & $(p_1,p_2,p_3\in\ZZ_{\ge 2})$ \\
\hline
II (Fermat + chain) & $x^{p_1}+y^{p_2}+yz^{\frac{p_3}{p_2}}$ & $x^{p_1}+y^{p_2}z+z^{\frac{p_3}{p_2}}$\\
&  $(p_1,p_2,\frac{p_3}{p_2}\in\ZZ_{\ge 2})$ & $(p_1,p_2,\frac{p_3}{p_2}\in\ZZ_{\ge 2})$\\
\hline
III (Fermat + loop) & $x^{p_1}+zy^{q_2+1}+yz^{q_3+1}$ & $x^{p_1}+zy^{q_2+1}+yz^{q_3+1}$\\
& $(p_1\in\ZZ_{\ge 2}$, $q_2,q_3\in\ZZ_{\ge 1})$ 
& $(p_1\in\ZZ_{\ge 2}$, $q_2,q_3\in\ZZ_{\ge 1})$\\
\hline
IV (chain)& $x^{p_1}+xy^{\frac{p_2}{p_1}}+yz^{\frac{p_3}{p_2}}$ 
& $x^{p_1}y+y^{\frac{p_2}{p_1}}z+z^{\frac{p_3}{p_2}}$\\
&  $(p_1,\frac{p_3}{p_2}\in\ZZ_{\ge 2}, \frac{p_2}{p_1}\in\ZZ_{\ge 1})$
& $(p_1,\frac{p_3}{p_2}\in\ZZ_{\ge 2}, \frac{p_2}{p_1}\in\ZZ_{\ge 1})$\\
\hline
V (loop)& $x^{q_1}y+y^{q_2}z+z^{q_3}x$ & $zx^{q_1}+xy^{q_2}+yz^{q_3}$\\
& $(q_1,q_2,q_3\in\ZZ_{\ge 1})$ & $(q_1,q_2,q_3\in\ZZ_{\ge 1})$  \\
\hline
\end{tabular}
\end{center}
\caption{Invertible polynomials in $3$ variables}\label{TabAppendix1}
\end{table}
For two positive integers $a,b$ we denote their greatest common divisor by $(a,b)$. The following lemma is easy.

\begin{lemma} \label{lem:ka+lb=ab}
Let $a,b$ be positive integers with $c=(a,b)$. Then the equation
\[
k a + \ell b = ab
\]
has $c+1$ solutions $(k,\ell) \in \ZZ_{\geq 0}^2$, namely 
\[
(k, \ell) = \left(i \frac{b}{c}, (c-i) \frac{a}{c}\right), \quad i=0, \ldots , c.
\]
If $b|a$ then there is also a solution $(k, \ell)=(-1,a(b+1)/b)$. If $a|b$ then there is also a solution $(k, \ell)=(b(a+1)/a,-1)$.
\end{lemma}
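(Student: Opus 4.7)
The plan is to solve the equation $ka+\ell b=ab$ by reducing to the coprime case and invoking the standard parametrization of solutions of a linear Diophantine equation in two unknowns. First I would write $a=ca'$, $b=cb'$ with $\gcd(a',b')=1$ and divide through by $c$, so that the equation becomes $ka'+\ell b'=ca'b'$; an obvious particular solution is $(k_{0},\ell_{0})=(0,a)$, since $0\cdot a+a\cdot b=ab$.

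Next, since $\gcd(a',b')=1$, the full set of integer solutions of the associated homogeneous equation is $\{(tb',-ta'):t\in\ZZ\}$, so every integer solution of the inhomogeneous equation has the form
\[
(k,\ell)=\left(t\,\frac{b}{c},\,(c-t)\,\frac{a}{c}\right),\qquad t\in\ZZ.
\]
Imposing $k\geq 0$ forces $t\geq 0$ and imposing $\ell\geq 0$ forces $t\leq c$, so the admissible values are $t=0,1,\dots,c$. This yields exactly the $c+1$ nonnegative solutions $(k,\ell)=(ib/c,(c-i)a/c)$ for $i=0,\dots,c$ listed in the statement.

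For the two addenda I would then inspect the parameter values $t=-1$ and $t=c+1$ just outside the admissible range. The choice $t=-1$ gives $(k,\ell)=(-b/c,(c+1)a/c)$, which has integer entries precisely when $b/c=1$, i.e., $b\mid a$, and in that case equals $(-1,a(b+1)/b)$, matching the first addendum. Symmetrically, $t=c+1$ gives $(k,\ell)=((c+1)b/c,-a/c)$, integral exactly when $a/c=1$, i.e., $a\mid b$, and then equals $(b(a+1)/a,-1)$.

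The statement is elementary and no real obstacle arises beyond careful bookkeeping; one only needs the standard divisibility criterion for linear Diophantine equations. Its substance lies in its later use in the proof of Theorem~\ref{thm:strange}, where the count $c+1$ (together with the two borderline $-1$ solutions that must be excised) is what will translate the $\CC^{\ast}$-orbit invariants $\gcd(a_i,a_j)\ast(m(a_i,a_j;h)-1)$ into the Dolgachev numbers of the pair $(f,G_0)$ computed via the stack quotient.
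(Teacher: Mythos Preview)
The paper does not actually prove this lemma; it simply declares it ``easy'' and moves on. Your argument for the main claim --- parametrizing all integer solutions as $(tb/c,(c-t)a/c)$ and reading off the range $0\le t\le c$ for nonnegativity --- is correct and is exactly the kind of standard Diophantine bookkeeping the paper has in mind.

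There is one slip in your treatment of the addenda. You write that the $t=-1$ solution $(-b/c,(c+1)a/c)$ ``has integer entries precisely when $b/c=1$.'' That is false: since $c=\gcd(a,b)$, both $b/c$ and $a/c$ are \emph{always} integers, so this pair is an integer solution for every $a,b$. What you mean to say is that this solution has first coordinate equal to $-1$ (the specific form asserted in the lemma) precisely when $b/c=1$, i.e., when $b\mid a$; equivalently, a solution with $k=-1$ exists iff $-a+\ell b=ab$ has an integer solution $\ell=a(b+1)/b$, which forces $b\mid a$ since $\gcd(b,b+1)=1$. The symmetric correction applies to the $t=c+1$ case. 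With this rephrasing your argument is complete.
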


\begin{proposition} \label{prop:HT}
Let $G \subset H_i \subset G_f^{\rm fin}$ be the minimal subgroup containing the isotropy group of  the $i$-th isotropic point of ${\mathcal C}_{(f,G^{\rm fin}_f)}$, $i=1,2,3$. Then there exists an ordering of the isotropic points such that $H_i^T$ is the maximal subgroup of $G^T$ fixing the $i$-th coordinate. Moreover, 
$G^T/H_i^T \simeq {\rm Hom}(H_i/G, \CC^\ast)$.
\end{proposition}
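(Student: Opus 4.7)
The plan is to reduce both parts of the proposition to formal properties of the duality $(-)^T$ combined with an explicit description of the isotropy group $I_i \subset G^{\rm fin}_f$ at the $i$-th distinguished point of ${\mathcal C}_{(f,G^{\rm fin}_f)}$. Since the duality reverses inclusions, the definition $H_i = \langle G, I_i\rangle$ gives $H_i^T = G^T \cap I_i^T$. Hence to prove the first assertion it suffices to produce an ordering of the three isotropic points such that $I_i^T = K_i$, where $K_i := \{g \in G^{\rm fin}_{f^T} \mid g\cdot x_i = x_i\}$; then $H_i^T = G^T \cap K_i$ is exactly the maximal subgroup of $G^T$ fixing $x_i$.

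To identify $I_i$, one observes that any point of $(\CC^\ast)^3$ has trivial stabilizer in $G_f$, so the isotropic points of ${\mathcal C}_{(f,G^{\rm fin}_f)}$ correspond to $G_f$-orbits meeting a coordinate hyperplane. Going through Table~\ref{TabAppendix1} type by type, one checks --- essentially following \cite{ET} --- that there are precisely three such orbits and that, under the natural labeling by coordinate indices, $I_i$ consists of those $(\lambda_1,\lambda_2,\lambda_3) \in G^{\rm fin}_f$ with $\lambda_j = 1$ for $j\neq i$. The equality $I_i^T = K_i$ then follows from a direct calculation with the matrix $E^{-1}$: the element $\prod_j \rho_j^{r_j} \in G^{\rm fin}_{f^T}$ has $i$-th coordinate ${\bf e}[\sum_j r_j a_{ji}]$, and the exponent vectors of generators of $I_i$ are proportional to the $i$-th column of $E$; plugging into the pairing $(r_1,r_2,r_3)E^{-1}(s_1,s_2,s_3)^T$ defining $I_i^T$ reduces this pairing precisely to the condition $\sum_j r_j a_{ji} \in \ZZ$, i.e.\ that the $i$-th coordinate of the element is trivial in $\CC^\ast$.

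For the second assertion, apply the functor $\mathrm{Hom}(-, \CC^\ast)$ (exact on finite abelian groups, since $\CC^\ast$ is divisible) to the short exact sequence
\[ 1 \longrightarrow H_i/G \longrightarrow G^{\rm fin}_f/G \longrightarrow G^{\rm fin}_f/H_i \longrightarrow 1. \]
This yields a short exact sequence whose outer nontrivial terms are, by Proposition~\ref{Prop:Gfin/G}, $H_i^T$ and $G^T$, so we obtain the sequence $1 \to H_i^T \to G^T \to \mathrm{Hom}(H_i/G, \CC^\ast) \to 1$ and hence $G^T/H_i^T \simeq \mathrm{Hom}(H_i/G, \CC^\ast)$.

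The main obstacle is the case-by-case analysis in the second paragraph. For chain and loop types the variables are coupled through the monomial structure, so the correspondence between the three isotropic points and the coordinate indices is not manifest from the equations; it has to be read off against the description of the Dolgachev numbers in \cite{ET} or against Theorem~\ref{thm:Dol}. Once this labeling is fixed, the duality computation $I_i^T = K_i$ is formally uniform but takes a slightly different combinatorial form in each type.
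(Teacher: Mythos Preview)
Your argument for the second assertion is identical to the paper's: both apply $\mathrm{Hom}(-,\CC^\ast)$ to the short exact sequence $1 \to H_i/G \to G_f^{\rm fin}/G \to G_f^{\rm fin}/H_i \to 1$ and invoke Proposition~\ref{Prop:Gfin/G}.

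For the first assertion, your overall strategy also matches the paper's: both rest on the observation that every isotropic point of $\mathcal{C}_{(f,G_f^{\rm fin})}$ lies in $\{xyz=0\}$, so its isotropy is governed by which coordinate(s) vanish. The paper's treatment is brief --- it simply asserts that the point ``has one coordinate equal to zero'' and that $G_f^{\rm fin}/H_i$ is then the maximal subgroup fixing that coordinate, relying implicitly on the description of the isotropic points in \cite{ET}. Your proposal is more explicit, factoring $H_i = \langle G, I_i\rangle$ and aiming to compute $I_i^T$ directly.

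There is, however, a genuine gap in your uniform description of $I_i$. The characterization ``$I_i$ consists of those $(\lambda_1,\lambda_2,\lambda_3)\in G_f^{\rm fin}$ with $\lambda_j=1$ for $j\neq i$'' presupposes that the $i$-th isotropic point has \emph{exactly one} vanishing coordinate. For chain type (Type~IV) this fails: the zero locus of $f=x^{p_1}+xy^{p_2/p_1}+yz^{p_3/p_2}$ contains the points $(0,b,0)$ and $(0,0,c)$, whose isotropy groups in $G_f^{\rm fin}$ are the stabilizers of the single \emph{nonzero} coordinate, i.e.\ $\{(\lambda_1,1,\lambda_3)\}$ and $\{(\lambda_1,\lambda_2,1)\}$, not groups of the form $\{\lambda_j=1,\ j\neq i\}$. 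Consequently your pairing computation ``the exponent vectors of generators of $I_i$ are proportional to the $i$-th column of $E$'' does not apply uniformly, and the deduction $I_i^T=K_i$ has to be redone for these cases with the correct $I_i$. This is entirely fixable within the case-by-case framework you already propose --- and, to be fair, the paper's own phrase ``has one coordinate equal to zero'' glosses over the same point --- but the uniform formula you sketch does not go through as stated.
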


\begin{proof} By Proposition~\ref{Prop:Gfin/G}, $H_i^T \simeq {\rm Hom}(G^{\rm fin}_f/H_i, \CC^\ast)$. Denote the $i$-th isotropic point of ${\mathcal C}_{(f,G^{\rm fin}_f)}$ by $p_i$. Since each element $(\lambda_1,\lambda_2, \lambda_3)\in G_f^{\rm fin}$ acts on $\CC^3$ diagonally,
the point $p_i$ must be contained in the subvariety $\{xyz=0\}\subset C_{(f,G_f^{\rm fin})}$, hence has one coordinate equal to zero. Since $H_i$ is the minimal subgroup containing the isotropy group of this point, the group $G_f^{\rm fin}/H_i$ is the maximal subgroup fixing this coordinate.

For the proof of the second statement, apply the functor ${\rm Hom}( -, \CC^\ast)$ to the short exact sequence
\[ 
\begin{CD}
\{ 1 \}  \longrightarrow H_i/G \longrightarrow G^{\rm fin}_f/G \longrightarrow G^{\rm fin}_f/H_i\longrightarrow \{ 1 \} .
\end{CD}
\]
Using Proposition~\ref{Prop:Gfin/G}, this yields the short exact sequence
\[ 
\begin{CD}
\{ 1 \}  \longrightarrow H_i^T  \longrightarrow G^T \longrightarrow {\rm Hom}(H_i/G, \CC^\ast)\longrightarrow \{ 1 \} .
\end{CD}
\]
\end{proof}

\begin{proof}[Proof of Theorem \ref{thm:strange}]
The statement will be proved case by case according to the different types of invertible polynomials. In order to compute the Dolgachev numbers of the pair $(f,G_0)$, we use Theorem~\ref{thm:Dol} and the isomorphism ${\rm Hom}(G^{\rm fin}_f/G_0, \CC^\ast) \simeq G_0^T$ given by Proposition~\ref{Prop:Gfin/G}. Let $G_0 \subset H_i \subset G_f^{\rm fin}$ be the minimal subgroup containing the isotropy group of  the $i$-th isotropic point of ${\mathcal C}_{(f,G^{\rm fin}_f)}$, $i=1,2,3$. Denote by $K_j$ the maximal subgroup of $G_0^T$ fixing the coordinate $x_j$, $j=1,2,3$.  We assume that the isotropic points of ${\mathcal C}_{(f,G^{\rm fin}_f)}$ are ordered in such a way that $H_i^T \simeq K_i$ and $G_0^T/K_i \simeq {\rm Hom}(H_i/G_0, \CC^\ast)$. This is possible by Proposition~\ref{prop:HT}.

{\em Type I}: Here $f(x,y,z) =f^T(x,y,z)= x^{p_1}+y^{p_2}+z^{p_3}$. The reduced weight system of $f$ is 
\[
W_f^{\rm red} = \left( \frac{p_2p_3}{c_f}, \frac{p_1p_3}{c_f}, \frac{p_1p_2}{c_f} ; \frac{p_1p_2p_3}{c_f} \right).
\]
All the weights divide the degree of $f$. By Lemma~\ref{lem:ka+lb=ab},
\[
m\left(\frac{p_2p_3}{c_f}, \frac{p_1p_3}{c_f} ; \frac{p_1p_2p_3}{c_f} \right) = (p_1,p_2)+1, \quad
\left(\frac{p_2p_3}{c_f}, \frac{p_1p_3}{c_f} \right) = \frac{p_3}{c_f} \cdot (p_1,p_2).
\]
By symmetry, the same results hold for the other pairs of weights.
Hence
\[
A^{\CC^\ast} = \left( \frac{p_3(p_1,p_2)}{c_f}  \ast (p_1,p_2), \frac{p_2(p_1,p_3)}{c_f}  \ast (p_1,p_3), \frac{p_1(p_2,p_3)}{c_f}  \ast (p_2,p_3) \right).
\]
We have
\[ A_{(f,G_f^{\rm fin})}  =  ( p_1, p_2, p_3). \]
We have $K_3=\langle g_{0,f'} \rangle^T$ where $f'(x,y) = x^{p_1}+y^{p_2}$. According to 
Proposition~\ref{Prop:ordJT}, $K_3$ has order $(p_1,p_2)$. The same arguments can be applied to the other pairs of variables. This shows that $A_{(f, G_0)}=A^{\CC^\ast}$.

{\em Type II}: The equation is $f(x,y,z)=x^{p_1}+y^{p_2}+yz^{\frac{p_3}{p_2}}$ where $p_2 | p_3$. The reduced weight system of $f$ is
\[
W_f^{\rm red} = \left( \frac{p_3}{c_f}, \frac{p_1p_3}{p_2c_f}, \frac{(p_2-1)p_1}{c_f} ; \frac{p_1p_3}{c_f} \right).
\]
We have 
\begin{eqnarray*}
A^{\CC^\ast} & = & \left( \frac{(p_2-1)p_1}{c_f} \left( \mbox{ if } \frac{(p_2-1)p_1}{c_f} \not| \ \frac{p_1p_3}{c_f} \right),  \right. \\
 & & \left(\frac{p_3}{c_f}, \frac{p_1p_3}{p_2c_f} \right) \ast \left( m\left(\frac{p_3}{c_f}, \frac{p_1p_3}{p_2c_f}; \frac{p_1p_3}{c_f} \right) -1 \right) , \\
  & &  \left. \left(\frac{p_1p_3}{p_2c_f},  \frac{(p_2-1)p_1}{c_f} \right) \ast \left( m\left(\frac{p_1p_3}{p_2c_f},  \frac{(p_2-1)p_1}{c_f}; \frac{p_1p_3}{c_f} \right) -1 \right) \right). \\
\end{eqnarray*}
We have 
\[
\left(\frac{p_3}{c_f}, \frac{p_1p_3}{p_2c_f} \right) = \frac{p_3}{p_2c_f} \cdot (p_1.p_2), \quad 
\left(\frac{p_1p_3}{p_2c_f},  \frac{(p_2-1)p_1}{c_f} \right) = \frac{p_1}{c_f} \cdot \left(\frac{p_3}{p_2}, p_2-1\right).
\]
By Lemma~\ref{lem:ka+lb=ab}, we obtain
\[ m\left(\frac{p_3}{c_f}, \frac{p_1p_3}{p_2c_f}; \frac{p_1p_3}{c_f} \right) = (p_1, p_2)+1. \]
Moreover, 
\begin{eqnarray*}
m\left(\frac{p_1p_3}{p_2c_f},  \frac{(p_2-1)p_1}{c_f}; \frac{p_1p_3}{c_f} \right) & = & 
\left| \left\{ (k,\ell) \in \ZZ_{\geq 0}^2 \, \left| \,  k\frac{p_1p_3}{p_2c_f} + \ell  \frac{(p_2-1)p_1}{c_f} = \frac{p_1p_3}{c_f} \right\} \right| \right. \\
 & = & \left| \left\{ (k,\ell) \in \ZZ_{\geq 0}^2 \, \left| \,  (k-1)\frac{p_3}{p_2} + \ell (p_2-1) = \frac{p_3}{p_2}(p_2-1) \right\} \right| \right. .
\end{eqnarray*}
By Lemma~\ref{lem:ka+lb=ab}, the last number is equal to $(\frac{p_3}{p_2}, p_2-1)+1$ if $(p_2-1) \! \not| \, p_3$ and $(\frac{p_3}{p_2}, p_2-1)+2$ if $(p_2-1) | p_3$. Therefore we obtain
\[ A^{\CC^\ast}= \left( \frac{(p_2-1)p_1}{c_f}, \frac{p_3(p_1,p_2)}{p_2c_f}  \ast (p_1,p_2), \frac{p_1\left(\frac{p_3}{p_2}, p_2-1\right)}{c_f}  \ast \left(\frac{p_3}{p_2}, p_2-1\right) \right). \]

On the other hand, we have
\[ A_{(f,G_f^{\rm fin})} = \left( p_1, (p_2-1)p_1, \frac{p_3}{p_2} \right). \]
We show that $K_2$ is trivial. Let $p_2=a c_f +b$ with $a,b \in \ZZ$, $0 \leq b < c_f$. Define
$r_1:= \frac{p_1(b-1)}{c_f}, r_2:= a, r_3:= -\frac{bp_3}{p_2c_f}$.
Then $r_1,r_2,r_3 \in \ZZ$ and $\rho_1^{r_1}\rho_2^{r_2}\rho_3^{r_3}$ is an element of $G_0^T$ which multiplies the coordinate $y$ by $e^{2 \pi \sqrt{-1}/c_f}$. This yields the element $(p_2-1)p_1/c_f \in  A_{(f, G_0)}$.
We have $K_3=\langle g_{0,f'} \rangle^T$ where $f'(x,y) = x^{p_1}+y^{p_2}$. According to 
Proposition~\ref{Prop:ordJT}, $K_3$ has order $(p_1,p_2)$. Moreover,  $K_1=\langle g_{0,f''} \rangle^T$ where $f''(y,z)=y^{p_2}+yz^{\frac{p_3}{p_2}}$ and $|K_1|=(\frac{p_3}{p_2}, p_2-1)$. This shows that $A_{(f, G_0)}=A^{\CC^\ast}$.

{\em Type III}: Here $f(x,y,z)=f^T(x,y,z) = x^{p_1}+zy^{q_2+1}+yz^{q_3+1}$.
The reduced weight system of $f$ is
\[ W_f^{\rm red} = \left( \frac{p_2}{c_f}, \frac{p_1q_3}{c_f}, \frac{p_1q_2}{c_f}; \frac{p_1p_2}{c_f} \right) \]
where $p_2+1=(q_2+1)(q_3+1)$. We have
\[ \left( \frac{p_2}{c_f}, \frac{p_1q_3}{c_f} \right) = \left( \frac{p_2}{c_f}, \frac{p_1q_2}{c_f} \right)=1\]
since a common divisor of $p_2$ and of one of the numbers $q_2$ and $q_3$ has to divide the other one as well because $p_2=q_2(q_3+1)+q_3=q_3(q_2+1)+q_2$. Hence it has to divide $c_f$. Therefore we have 
\begin{eqnarray*}
A^{\CC^\ast} & = & \left( \frac{p_1q_3}{c_f} \left( \mbox{ if } \frac{p_1q_3}{c_f} \not| \ \frac{p_1p_2}{c_f} \right),  
 \frac{p_1q_2}{c_f} \left( \mbox{ if } \frac{p_1q_2}{c_f} \not| \ \frac{p_1p_2}{c_f} \right),  \right.  \\
  & & \left.  \left(\frac{p_1q_2}{c_f},  \frac{p_1q_3}{c_f} \right) \ast \left( m\left(\frac{p_1q_2}{c_f},  \frac{p_1q_3}{c_f}; \frac{p_1p_2}{c_f} \right) -1 \right) \right). \\
\end{eqnarray*}
We have
\begin{eqnarray*}
\left(\frac{p_1q_2}{c_f},  \frac{p_1q_3}{c_f} \right)  & = & \frac{p_1}{c_f} \cdot (q_2,q_3), \\
m\left(\frac{p_1q_2}{c_f},  \frac{p_1q_3}{c_f}; \frac{p_1p_2}{c_f} \right) & = & 
\left| \left\{ (k,\ell) \in \ZZ_{\geq 0}^2 \, \left| \,  k\frac{p_1q_2}{c_f} + \ell \frac{p_1q_3}{c_f}= \frac{p_1p_2}{c_f} \right\} \right| \right. \\
 & = & \left| \left\{ (k,\ell) \in \ZZ_{\geq 0}^2 \, \left| \,  (k-1)q_2 + (\ell-1)q_3 = q_2q_3 \right\} \right| \right. .
\end{eqnarray*}
By Lemma~\ref{lem:ka+lb=ab}, the last number is equal to $(q_2,q_3)+1$ if $q_2 \! \not| \, q_3$ and 
$q_3 \! \not| \, q_2$, to $(q_2,q_3)+2$ if $q_2|q_3$ or $q_3|q_2$ but not $q_2=q_3$, and to $(q_2,q_3)+3$ if $q_2=q_3$.
Hence we obtain
\[ A^{\CC^\ast} = \left( \frac{p_1q_2}{c_f}, \frac{p_1q_3}{c_f}, \frac{p_1(q_2,q_3)}{c_f} \ast (q_2,q_3) \right). \]

On the other hand, we have
\[ A_{(f,G_f^{\rm fin})} = (p_1,p_1q_3,p_1q_2). \]
We show that $K_2$ is trivial. Let $r_1:= \frac{p_1q_2}{c_f}, r_2:=1, r_3:=q_3+1- \frac{p_2}{c_f}$. Then $r_1,r_2,r_3$ are integers and the element $\rho_1^{r_1}\rho_2^{r_2}\rho_3^{r_3} \in G_0^T$ multiplies the coordinate $y$ by $e^{2 \pi \sqrt{-1}/c_f}$. So we see that $\frac{p_1q_3}{c_f} \in A_{(f, G_0)}$. For symmetry reasons, also $K_3= \{ 1 \}$ and $\frac{p_1q_2}{c_f} \in A_{(f, G_0)}$. Finally,  $K_1=\langle g_{0,f'} \rangle^T$ where $f'(y,z)=zy^{q_2+1}+yz^{q_3+1}$ and $|K_1|=(q_2,q_3)$. Hence the numbers $\frac{p_1(q_2,q_3)}{c_f} \ast (q_2,q_3)$ are in $A_{(f, G_0)}$.

{\em Type IV}: Here $f(x,y,z)=x^{p_1}+xy^{\frac{p_2}{p_1}}+yz^{\frac{p_3}{p_2}}$ where $p_1|p_2$ and $p_2|p_3$. The reduced weight system is
\[ W_f^{\rm red} = \left( \frac{p_3}{p_1c_f}, \frac{(p_1-1)p_3}{p_2c_f}, \frac{p_2-p_1+1}{c_f}; \frac{p_3}{c_f} \right). \]
The orbit invariants are 
\begin{eqnarray*}
A^{\CC^\ast} & = & \left( \frac{p_2-p_1+1}{c_f},  
 \frac{(p_1-1)p_3}{p_2c_f} \left( \mbox{ if } \frac{(p_1-1)p_3}{p_2c_f} \not| \ \frac{p_3}{c_f} \right),  \right.  \\
  & & \left.  \left( \frac{p_3}{p_1c_f}, \frac{(p_1-1)p_3}{p_2c_f}, \right) \ast \left( m\left(\frac{p_3}{p_1c_f}, \frac{(p_1-1)p_3}{p_2c_f}; \frac{p_3}{c_f} \right) -1 \right) \right). \\
\end{eqnarray*}
A similar reasoning as for Type III shows
\[ A^{\CC^\ast}= \left(  \frac{p_2-p_1+1}{c_f},  \frac{(p_1-1)p_3}{p_2c_f}, \frac{p_3\left(\frac{p_2}{p_1},p_1-1 \right)}{p_2c_f} \ast \left(\frac{p_2}{p_1},p_1-1 \right) \right). \]

On the other hand, we have
\[ A_{(f,G_f^{\rm fin})} = \left( p_2-p_1+1, (p_1-1) \frac{p_3}{p_2}, \frac{p_3}{p_2} \right). \]
The group $K_1$ is trivial, since, by Proposition~\ref{Prop:genJT} (i), the element $\rho_n^{d/c_f} \in G_0^T$ multiplies the coordinate $x$ by $e^{2 \pi \sqrt{-1}/c_f}$. Hence $\frac{p_2-p_1+1}{c_f} \in A_{(f, G_0)}$. Moreover, the group $K_2$ is trivial: Let $\frac{p_2}{p_1} = ac_f +b$ with $a,b \in \ZZ$, $0 \leq b < c_f$. Define
$r_1:= p_1(1-a) + \frac{p_2-p_1+1}{c_f},  r_2:= a,  r_3:= - \frac{bp_3}{p_2c_f}$.
Then $r_1, r_2, r_3$ are integers and the element $\rho_1^{r_1}\rho_2^{r_2}\rho_3^{r_3} \in G_0^T$ multiplies the coordinate $y$ by $e^{2 \pi \sqrt{-1}/c_f}$. Hence $\frac{(p_1-1)p_3}{p_2c_f} \in A_{(f, G_0)}$. Finally,  $K_3=\langle g_{0,f'} \rangle^T$ where $f'(x,y)=x^{p_1}+xy^{\frac{p_2}{p_1}}$ and $|K_3|=\left(\frac{p_2}{p_1},p_1-1 \right)$. This shows that $A_{(f, G_0)}=A^{\CC^\ast}$.

{\em Type V}: Here $f(x,y,z)=f^T(x,y,z) = x^{q_1}y+y^{q_2}z+z^{q_3}x$. The reduced weight system is 
\[ W_f^{\rm red} = \left( \frac{q_2q_3-q_3+1}{c_f}, \frac{q_3q_1-q_1+1}{c_f}, \frac{q_1q_2-q_2+1}{c_f}; \frac{q_1q_2q_3+1}{c_f} \right). \]
One can easily show that the weights of the reduced weight system are coprime and that none of the weights divides the degree. Hence the orbit invariants are
\[ A^{\CC^\ast}= \left( \frac{q_2q_3-q_3+1}{c_f}, \frac{q_3q_1-q_1+1}{c_f}, \frac{q_1q_2-q_2+1}{c_f} \right). \]
The Dolgachev numbers of the pair $(f,G_f^{\rm fin})$ are
\[ A_{(f,G_f^{\rm fin})} = ( q_2q_3-q_3+1, q_3q_1-q_1+1, q_1q_2-q_2+1). \]
All the groups $K_i$, $i=1,2,3$, are trivial, since, by Proposition~\ref{Prop:genJT} (ii), for each coordinate there is an element in $G_0^T$ which multiplies this coordinate by $e^{2 \pi \sqrt{-1}/c_f}$. This proves the claim for Type V and finishes the proof of Theorem~\ref{thm:strange}.
\end{proof}

\section{Orbifold E-function and characteristic polynomial} \label{sect:E-function}
Let $n$ be a positive integer.
Consider a polynomial $f\in\CC[x_1,\dots, x_n]$ satisfying 
\[
\dim_\CC\CC[x_1,\dots, x_n]\left/\left(\frac{\partial f}{\partial x_1},\dots,\frac{\partial f}{\partial x_n}\right)<\infty\right. .
\] 
We regard the polynomial $f$ as a holomorphic map $f:X\to\CC$ where 
$X$ is a suitably chosen neighborhood of $0\in\CC^n$ so that the fibration 
$f$ has good technical properties.
Consider the {\it Milnor fiber} $X_f:=\{x\in X~|~f(x)=1\}$ of the fibration $f:X\to\CC$. 
It is well-known that $H^{n-1}(X_f,\CC)$ can be equipped with a canonical mixed Hodge structure 
with an automorphism, which is constructed by Steenbrink \cite{st:1} and the automorphism denoted by $c$ 
is given by the Milnor monodromy. 
We can naturally associate a bi-graded vector space to a mixed Hodge structure 
with an automorphism.
Consider the Jordan decomposition $c=c_{\rm ss}\cdot c_{\rm unip}$ of $c$ where $c_{\rm ss}$ and $c_{\rm unip}$ denote 
the semi-simple part and unipotent part respectively.
For $\lambda\in \CC$, let  
\begin{equation}
H^{n-1}(X_f,\CC)_\lambda:={\rm Ker}(c_{\rm ss}-\lambda\cdot {\rm id}:H^{n-1}(X_f,\CC)\longrightarrow 
H^{n-1}(X_f,\CC)).
\end{equation}
Denote by $F^\bullet$ be the Hodge filtration of the mixed Hodge structure.
\begin{definition}
Define the bi-graded vector space $\H^{p,q}_f$ indexed by $\QQ$ as follows:
\begin{enumerate}
\item If $p+q\ne n$, then  $\H^{p,q}_f:=0$.
\item If $p+q=n$ and $p\in\ZZ$, then  
\begin{equation}
\H^{p,q}_f:={\rm Gr}^{p}_{F^\bullet}H^{n-1}(X_f,\CC)_1.
\end{equation}
\item If $p+q=n$ and $p\notin\ZZ$, then  
\begin{equation}
\H^{p,q}_f:={\rm Gr}^{[p]}_{F^\bullet}H^{n-1}(X_f,\CC)_{e^{-2\pi\sqrt{-1} p}},
\end{equation}
where $[p]$ is the largest integer less than $p$.
\end{enumerate}
\end{definition}
Clearly, we have an isomorphism 
$H^{n-1}(X_f,\CC)\simeq \oplus_{p,q\in\QQ}\H^{p,q}_f$ of vector spaces.
\begin{remark}
The rational number $q$ with $\H^{p,q}_f\ne 0$ is called the {\em exponent} of 
the singularity $f$. 
The set of exponents is called the {\em spectrum} of the singularity $f$, 
which is one of the most important invariants of $f$. 
\end{remark}
It is very useful to introduce the generating function of exponents:
\begin{definition}
The {\em E-function} of $f$ is  (cf.\ \cite{BD})
\begin{equation}
E(f;t,\bar{t}):=\sum_{p,q\in\QQ}(-1)^{n}\dim_\CC \H^{p,q}_f\cdot 
t^{p-\frac{n}{2}}\bar{t}^{q-\frac{n}{2}}.
\end{equation}
The {\em characteristic polynomial} of $f$ is 
\begin{equation}
\phi(f;t):= \prod_{q\in\QQ} (t- {\bf e}[q])^{\dim_\CC\H^{p,q}_f}.
\end{equation}
This is a polynomial of degree $\mu_f$ where $\mu_f$ is the {\em Milnor number} of $f$, i.e.\ the dimension of $H^{n-1}(X_f,\CC)$.
\end{definition}
It is easy to see that $E(f;t^{-1},\bar{t}^{-1})=E(f;t,\bar{t})$.

Let us give examples of the E-function. 
By \cite{st:2}, we have the following
\begin{proposition} \label{prop:Eqh}
Let  $f\in\CC[x_1,\dots, x_n]$ be a weighted homogeneous polynomial with 
a system of weights $(w_1, . . . ,w_n;d)$. Set $q_i:=w_i/d$ for $i=1,\dots, n$.
We have 
\begin{equation}
E(f;t,\bar{t})=(-1)^n\prod_{i=1}^n 
\frac{1-\left(\frac{\bar{t}}{t}\right)^{1-q_i}}
{1-\left(\frac{\bar{t}}{t}\right)^{q_i}}\left(\frac{\bar{t}}{t}\right)^{q_i-\frac{1}{2}}.
\end{equation}
\qed
\end{proposition}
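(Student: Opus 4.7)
The strategy is to reduce the computation of $E(f;t,\bar t)$ to the Poincar\'e series of the module $\Omega_f$, and then compute that Poincar\'e series by a standard Koszul argument.

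\emph{Step 1: Identify Hodge pieces with graded pieces of $\Omega_f$.} For a weighted homogeneous polynomial with isolated singularity at the origin, Steenbrink's theorem (\cite{st:2}, and already in \cite{st:1}) identifies the Hodge graded pieces of $H^{n-1}(X_f,\CC)$ with graded pieces of the module
$$\Omega_f=\Omega^n(\CC^n)/(df\wedge \Omega^{n-1}(\CC^n)),$$
equipped with the $\QQ$-grading in which the monomial form $x_1^{m_1}\cdots x_n^{m_n}\,dx_1\wedge\cdots\wedge dx_n$ has degree $\sum_i (m_i+1)q_i$. Concretely, the statement to invoke is that for every $\alpha\in\QQ_{>0}$,
$$\H^{n-\alpha,\alpha}_f\;\cong\;(\Omega_f)_\alpha,$$
where $(\Omega_f)_\alpha$ is the degree-$\alpha$ graded piece. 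In particular the exponents of $f$ are exactly the degrees appearing in $\Omega_f$ with multiplicity, and only graded pieces with $p+q=n$ contribute.

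\emph{Step 2: Compute the Poincar\'e series of $\Omega_f$.} Since $f$ has an isolated singularity, $\partial_1 f,\dots,\partial_n f$ form a regular sequence in $\CC[x_1,\dots,x_n]$ and have fractional weighted degrees $1-q_1,\dots,1-q_n$. The Koszul resolution of the Jacobian ring then yields
$$P(\mathrm{Jac}(f);T)\;=\;\prod_{i=1}^n\frac{1-T^{1-q_i}}{1-T^{q_i}}.$$
Multiplying by $T^{\sum q_i}$ (the degree of the top form $dx_1\wedge\cdots\wedge dx_n$) gives
$$P(\Omega_f;T)\;=\;\prod_{i=1}^n\frac{T^{q_i}(1-T^{1-q_i})}{1-T^{q_i}}.$$

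\emph{Step 3: Assemble the E-function.} Writing $q=\alpha$, $p=n-\alpha$, and $u=\bar t/t$, one has
$$E(f;t,\bar t)\;=\;(-1)^n\sum_{\alpha}\dim_\CC(\Omega_f)_\alpha\cdot t^{(n-\alpha)-n/2}\bar t^{\alpha-n/2}\;=\;(-1)^n u^{-n/2}P(\Omega_f;u)$$
by Step 1. Plugging the product formula from Step 2 and distributing the factor $u^{-n/2}=\prod_i u^{-1/2}$ into the $n$ factors of the product gives exactly
$$E(f;t,\bar t)\;=\;(-1)^n\prod_{i=1}^n\frac{1-u^{1-q_i}}{1-u^{q_i}}\,u^{q_i-1/2},$$
which is the claimed identity.

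\emph{Where the difficulty lies.} Steps 2 and 3 are purely formal: the Koszul resolution and substitution are routine. The substantive content is Step 1, i.e.\ Steenbrink's identification of Hodge filtration pieces on the vanishing cohomology with graded pieces of $\Omega_f$ for weighted homogeneous singularities; this is the deep input, and since the proposition is explicitly attributed to \cite{st:2}, it may simply be cited rather than reproved here.
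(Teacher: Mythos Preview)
Your argument is correct. The paper itself gives no proof of this proposition: it simply states ``By \cite{st:2}, we have the following'' and closes with a \qed. Your three-step outline is precisely the standard derivation behind that citation---Steenbrink's identification of the spectral numbers with the degrees in $\Omega_f$, the Koszul computation of the Poincar\'e series of $\mathrm{Jac}(f)$, and the substitution $u=\bar t/t$---so there is no divergence in approach, only in level of detail: you have unpacked what the paper leaves as a reference.
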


Also the E-function of a singularity of type $T_{p_1,p_2,p_3}$ will play a role.
\begin{example}Let $f(x_1,x_2,x_3)= x_1^{p_1} + x_2^{p_2} + x_3^{p_3} - x_1x_2x_3$.
By \cite[(4.2) Example]{st:1}, we have 
\begin{eqnarray}
E(f;t,\bar{t}) & = & -\left( t^{2-\frac{3}{2}}\bar{t}^{1-\frac{3}{2}} +  t^{1-\frac{3}{2}}\bar{t}^{2-\frac{3}{2}}+ \left(\frac{\bar{t}}{t}\right)^{-\frac{3}{2}}\sum_{i=1}^3 
\frac{1-\left(\frac{\bar{t}}{t}\right)^{1-\frac{1}{p_i}}}
{1-\left(\frac{\bar{t}}{t}\right)^{\frac{1}{p_i}}} \right), \\
\phi(f;t) & = & (t-1)^2 \prod_{i=1}^3 \frac{t^{p_i}-1}{t-1}.
\end{eqnarray}
\end{example}

Let $G$ be a finite abelian subgroup of ${\rm GL}_n(\CC)$ acting diagonally on 
the coordinates $(x_1,\dots ,x_n)$ such that $f(x_1,\dots,x_n)$ is invariant under $G$. 
To each pair $(f,G)$ we can associate a natural mixed Hodge structure with an automorphism, 
which gives the following bi-graded vector space:
\begin{definition}
Define the bi-graded $\CC$-vector space $\H_{f,G}$ as 
\begin{equation}
\H_{f,G}:=\bigoplus_{g\in G}(\H_{f^g})^G(-{\rm age}(g),-{\rm age}(g))
\end{equation}
where we set $\CC^{n_g}:=\{x\in\CC^n~|~g\cdot x=x \}$ and $f^g:=f|_{\CC^{n_g}}$.
\end{definition}

\begin{theorem} \label{thm:EGqh}
Let $f$ be a weighted homogeneous polynomial as in Proposition~\ref{prop:Eqh}. Write $g,h \in G$ in the form 
${\rm diag}({\bf e}[q_1a_1], {\bf e}[q_2a_2], {\bf e}[q_3a_3])$,
${\rm diag}({\bf e}[q_1b_1], {\bf e}[q_2b_2], {\bf e}[q_3b_3])$,
for $a_i,b_i\in \ZZ$, $i=1,2,3$. 
The E-function for the pair $(f,G)$ is naturally given by the following formula (cf.\ \cite{t:3})$:$ 
\begin{equation}
E(f,G)(t,\bar{t})=\sum_{g\in G}E_g(f,G)
(t,\bar{t}),
\end{equation}
\begin{equation}
\begin{split}
& E_g(f,G)(t,\bar{t})\\
:=(-1)^{n_g}\left(\frac{\bar{t}}{t}\right)^{-\frac{3}{2}} \cdot \left(\prod_{q_ia_i\not\in\ZZ}\left({t}{\bar{t}}\right)^{\frac{1}{2}} \right)
\cdot \frac{1}{|G|}& \sum_{h\in G}\prod_{q_ia_i\in\ZZ}
\frac{1-{\bf e}\left[(1-q_i)b_i\right]\left(\frac{\bar{t}}{t}\right)^{1-q_i}}
{1-{\bf e}\left[q_ib_i\right]\left(\frac{\bar{t}}{t}\right)^{q_i}}{\bf e}\left[q_ib_i\right]
\left(\frac{\bar{t}}{t}\right)^{q_i}.
\end{split}
\end{equation}
\qed
\end{theorem}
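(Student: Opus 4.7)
The plan is to compute the E-function of $(f,G)$ sector-by-sector from the definition $\H_{f,G}=\bigoplus_{g\in G}(\H_{f^g})^G(-\mathrm{age}(g),-\mathrm{age}(g))$. For a fixed $g=\mathrm{diag}(\mathbf{e}[q_1a_1],\mathbf{e}[q_2a_2],\mathbf{e}[q_3a_3])\in G$, the fixed coordinate subspace $\CC^{n_g}$ is the span of those $x_i$ with $q_ia_i\in\ZZ$, and $f^g$ is the weighted homogeneous polynomial in these $n_g$ variables obtained by setting the other coordinates to zero. Using the Thom-Sebastiani classification into Fermat, chain, and loop summands recalled in Section~\ref{sect:g0}, one checks that each $f^g$ still has an isolated singularity, so Proposition~\ref{prop:Eqh} applies and expresses $E(f^g;t,\bar t)$ as a product over the fixed indices.

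The second step is a $G$-equivariant refinement of Proposition~\ref{prop:Eqh}. For any $h=\mathrm{diag}(\mathbf{e}[q_1b_1],\mathbf{e}[q_2b_2],\mathbf{e}[q_3b_3])\in G$, the trace of $h$ on each mixed-Hodge-graded piece of $H^{n_g-1}(X_{f^g})$ is computed from the diagonal action of $h$ on the monomial basis of the Jacobian ring $\mathrm{Jac}(f^g)$. Concretely, this refines the factorization of Proposition~\ref{prop:Eqh} by replacing each $(\bar t/t)^{q_i}$ by $\mathbf{e}[q_ib_i](\bar t/t)^{q_i}$ and each $(\bar t/t)^{1-q_i}$ by $\mathbf{e}[(1-q_i)b_i](\bar t/t)^{1-q_i}$ in the fixed indices. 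Averaging over $h\in G$ via the projection formula $\dim V^G=|G|^{-1}\sum_h\mathrm{tr}(h|V)$ then yields the inner expression appearing in $E_g(f,G)(t,\bar t)$.

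Finally, the age shift $(-\mathrm{age}(g),-\mathrm{age}(g))$ combined with the reconciliation between the intrinsic $n_g$-variable normalization of $E(f^g)$ and the ambient $n=3$ normalization of $E(f,G)$ produces the prefactor $(\bar t/t)^{-3/2}\prod_{q_ia_i\notin\ZZ}(t\bar t)^{1/2}$: the $(\bar t/t)^{-3/2}$ encodes the global shift by $-n/2$, while the product of $(t\bar t)^{1/2}$'s collects the contributions of the $n-n_g$ transverse coordinates together with the age shift. Summing over $g\in G$ then yields the stated formula.

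The key obstacle is the $G$-equivariant refinement of Steenbrink's theorem invoked in the second step. For Brieskorn-Pham summands this is immediate, since the Jacobian ring has an explicit monomial basis $\{x^r:0\le r_i\le a_i-2\}$ on which $h$ acts diagonally by $\mathbf{e}[\sum q_ib_ir_i]$, and summing the corresponding monomials produces exactly the quoted factorization. For chain and loop summands the Jacobian basis is less transparent, and one must either exhibit a monomial basis compatible with the action or argue via the Thom-Sebastiani decomposition together with the deformation-invariance of the equivariant mixed Hodge structure; this is presumably the content of the reference \cite{t:3} cited in the statement.
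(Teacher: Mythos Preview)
The paper does not actually prove this theorem: the statement is followed immediately by \qed\ and the formula is attributed to \cite{t:3}. So there is nothing to compare against on the paper's side; your sketch already goes further than the paper does.

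That said, a couple of remarks on your outline. First, your appeal to the Thom--Sebastiani classification into Fermat, chain, and loop types to verify that each $f^g$ has an isolated singularity is only valid for \emph{invertible} polynomials, whereas the theorem as stated refers back to Proposition~\ref{prop:Eqh}, which concerns arbitrary weighted homogeneous polynomials. In the paper's applications $f$ is indeed invertible, so this is harmless in context, but your argument as written silently restricts the scope of the statement. Second, your identification of the prefactor is correct in spirit but slightly telegraphic: the factor $\prod_{q_ia_i\notin\ZZ}(t\bar t)^{1/2}$ arises precisely because $\mathrm{age}(g)+\mathrm{age}(g^{-1})=n-n_g$ for $g\in G$, so the $(-\mathrm{age}(g),-\mathrm{age}(g))$ shift together with the passage from the $n_g$-variable normalization $(p,q)\mapsto(p-n_g/2,q-n_g/2)$ to the $n$-variable normalization $(p,q)\mapsto(p-n/2,q-n/2)$ yields the exponent $(n-n_g)/2$ in $t\bar t$. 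Making this bookkeeping explicit would strengthen the third paragraph.

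Your final paragraph is honest about where the real work lies: the equivariant refinement of Steenbrink's computation for chain and loop pieces is indeed the substantive input, and the paper simply defers it to \cite{t:3}.
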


\begin{example}
Let $f(x_1,x_2,x_3)= x_1^{p_1} + x_2^{p_2} + x_3^{p_3} - x_1x_2x_3$ and $G$ be a subgroup of 
${\rm SL}_n(\CC) \cap G_f^{\rm fin}$. Write $g,h \in G$ in the form 
${\rm diag}({\bf e}[\frac{a_1}{p_1}], {\bf e}[\frac{a_2}{p_2}], {\bf e}[\frac{a_3}{p_3}])$,
${\rm diag}({\bf e}[\frac{b_1}{p_1}], {\bf e}[\frac{b_2}{p_2}], {\bf e}[\frac{b_3}{p_3}])$,
for $0\le a_i,b_i< p_i$, $i=1,2,3$.
The E-function for the pair $(f,G)$ is given by the following formula:
\begin{equation}
E(f,G)(t,\bar{t})  =  \sum_{g\in G}E_g(f,G)
(t,\bar{t}),
\end{equation}
where 
\begin{equation}
E_g(f,G)(t,\bar{t}) = \left\{ \begin{array}{cl} E'_{g}(f,G)(t,\bar{t}) -t^{2-\frac{3}{2}}\bar{t}^{1-\frac{3}{2}} -  t^{1-\frac{3}{2}}\bar{t}^{2-\frac{3}{2}} & \mbox{if } g={\rm id}_G, \\
E'_g(f,G)(t,\bar{t}) & \mbox{otherwise}, \end{array} \right.
\end{equation}
and
\begin{equation}
\begin{split}
& E'_g(f,G)(t,\bar{t})\\
 & := (-1)^{n_g} 
  \left(\frac{\bar{t}}{t}\right)^{-\frac{3}{2}} \cdot
\left(\prod_{i:\frac{a_i}{p_i}\not\in\ZZ}\left({t}{\bar{t}}\right)^{\frac{1}{2}}\right)\cdot
\frac{1}{|G|}\sum_{h\in G}\left(\sum_{i:\frac{a_i}{p_i}\in\ZZ}
\frac{1-{\bf e}\left[(1-\frac{1}{p_i})b_i \right]\left(\frac{\bar{t}}{t}\right)^{1-\frac{1}{p_i}}}
{1-{\bf e}\left[\frac{b_i}{p_i}\right] \left(\frac{\bar{t}}{t}\right)^{\frac{1}{p_i}}} {\bf e}\left[\frac{b_i}{p_i}\right] \left(\frac{\bar{t}}{t}\right)^{\frac{1}{p_i}}\right).
\end{split}
\end{equation}
\end{example}

\begin{definition}
Let $f(x_1, \ldots, x_n)$ be a polynomial which is invariant under a finite subgroup $G \subset {\rm GL}_n(\CC)$. 
The {\em characteristic polynomial} for the pair $(f,G)$ is defined by 
\begin{equation}
\phi(f,G)(t) := \prod_{g \in G} \phi_g(f,G)(t)^{(-1)^{n_g-1}}
\end{equation}
where $\phi_g(f,G)(t)$ is the characteristic polynomial of $f^g$ obtained from 
$E_g(f,G)(t, \bar{t})$. 

The {\em  Milnor number} $\mu_{(f,G)}$ of the pair $(f,G)$ is defined to be (cf.\ \cite{Wa})
\[ \mu_{(f,G)} := \sum_{g \in G} (-1)^{n_g-1} \mu_{f^{g}}. \]
\end{definition}

\begin{theorem} \label{thm:cusp}
Let $f(x_1,x_2,x_3)= x_1^{p_1} + x_2^{p_2} + x_3^{p_3} - x_1x_2x_3$ and $G$ be a subgroup of ${\rm SL}_3(\CC) \cap G_f^{\rm fin}$. Let $H_i \subset G$ be the maximal subgroup fixing the coordinate $x_i$, $i=1,2,3$. Define numbers $\gamma_1, \ldots , \gamma_s$ by
\[ (\gamma_1, \ldots, \gamma_s) = \left( \frac{p_i}{|G/H_i|} \ast |H_i|, i=1,2,3 \right) ,\]
where we omit numbers which are equal to one on the right-hand side. Then
\[ \phi(f,G)(t) = (t-1)^{2-2j_G} \prod_{i=1}^s \frac{t^{\gamma_i}-1}{t-1}. \]
\end{theorem}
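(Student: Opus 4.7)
My plan is to decompose $\phi(f,G)(t)=\prod_{g\in G}\phi_g(f,G)(t)^{(-1)^{n_g-1}}$ by partitioning $G$ according to the dimension $n_g$ of the fixed locus of $g$, then compute each factor separately.

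First I would exploit the hypothesis $G\subset \mathrm{SL}_3(\CC)$: if $\chi_i(g)=\chi_j(g)=1$ with $i\ne j$, the determinant relation forces $\chi_k(g)=1$, so $H_i\cap H_j=\{1\}$ for $i\ne j$ and no $g\ne \mathrm{id}$ can have $n_g\ge 2$. Consequently $G\setminus\{\mathrm{id}\}$ splits as the disjoint union $\bigsqcup_{i=1}^3(H_i\setminus\{\mathrm{id}\})$ of elements with $n_g=1$, together with elements with $n_g=0$, the latter numbering exactly $2j_G$ by the Remark in Section~2.

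Next I would compute $\phi_g$ in three cases. For $g\in H_i\setminus\{\mathrm{id}\}$, $f^g=x_i^{p_i}$ has Jacobian basis $\{x_i^{k-1}dx_i\}_{k=1}^{p_i-1}$ on which monodromy acts by $\mathbf{e}[k/p_i]$ and $G$ acts by the character $\chi_i^k$; since $\chi_i^k|_G$ is trivial iff $|G/H_i|$ divides $k$, the surviving eigenvalues are $\mathbf{e}[j/\gamma_i]$ for $j=1,\dots,\gamma_i-1$, giving $\phi_g(f,G)(t)=(t^{\gamma_i}-1)/(t-1)$. For $g$ with $n_g=0$, direct inspection of the $E_g$-formula of Theorem~4.3 (together with the integer-valued age shift for $G\subset\mathrm{SL}_3$) yields $\phi_g(f,G)(t)=t-1$. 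For $g=\mathrm{id}$, I would use the standard cusp identity $\phi(f;t)=(t-1)^2\prod_i(t^{p_i}-1)/(t-1)$: the two $1$-eigenvectors are represented by $1$ and $xyz$ in the Jacobian, both $G$-invariant because $\chi_1\chi_2\chi_3=1$, while the eigenvalue $\mathbf{e}[a/p_i]$-eigenspace is represented by $x_i^a$ with $G$-character $\chi_i^a$, trivial iff $|G/H_i|\mid a$, so $\phi_\mathrm{id}(f,G)(t)=(t-1)^2\prod_i(t^{\gamma_i}-1)/(t-1)$. Combining these contributions with the appropriate signs produces
\[
\phi(f,G)(t) = (t-1)^2\prod_{i=1}^3\frac{t^{\gamma_i}-1}{t-1} \cdot \prod_{i=1}^3\left(\frac{t^{\gamma_i}-1}{t-1}\right)^{|H_i|-1} \cdot (t-1)^{-2j_G} = (t-1)^{2-2j_G}\prod_{i=1}^3\left(\frac{t^{\gamma_i}-1}{t-1}\right)^{|H_i|},
\]
and factors with $\gamma_i=1$ contribute trivially and can be omitted, matching the stated formula.

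The principal obstacle is the step for $\phi_\mathrm{id}$ in the hyperbolic regime $\sum 1/p_i<1$, where $f$ is not weighted homogeneous and the $G$-equivariant monodromy decomposition cannot be read off directly from the Jacobian via a Steenbrink-type formula. The intended remedy is to extract $\phi_\mathrm{id}$ from the explicit closed form for $E_\mathrm{id}(f,G)$ in the Example of Section~4: the cusp-specific subtraction $-t^{1/2}\bar t^{-1/2}-t^{-1/2}\bar t^{1/2}$ is responsible precisely for the $(t-1)^2$ factor, while the $G$-averaged $E'_\mathrm{id}$ term collapses to $\prod_i(t^{\gamma_i}-1)/(t-1)$ after the same character-theoretic simplification. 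A secondary technical point is justifying the prescription $\phi_g(f,G)=t-1$ for $n_g=0$, which is consistent with the age-$1$/age-$2$ pairing of such elements highlighted in the Remark of Section~2.
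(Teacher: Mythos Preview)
Your proposal is correct and follows essentially the same route as the paper. Both proofs partition $G$ by the dimension $n_g$ of the fixed locus, use the $\mathrm{SL}_3$ hypothesis to rule out $n_g\ge 2$ for $g\neq\mathrm{id}$, compute $\phi_g(f,G)(t)=(t^{\widetilde\gamma_i}-1)/(t-1)$ for each of the $|H_i|-1$ nontrivial elements of $H_i$ via character orthogonality, obtain $\phi_{\mathrm{id}}(f,G)(t)=(t-1)^2\prod_i(t^{\widetilde\gamma_i}-1)/(t-1)$ from the explicit cusp $E$-function, assign $\phi_g=t-1$ to each of the $2j_G$ elements with $n_g=0$, and multiply. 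The only cosmetic difference is that for $g\in H_i\setminus\{\mathrm{id}\}$ you read off the $G$-invariant spectrum directly from the Jacobian of the weighted homogeneous restriction $f^g=x_i^{p_i}$, whereas the paper plugs into the cusp $E_g$-formula of the Example and expands the geometric series before averaging; the character-sum that isolates the multiples of $|G/H_i|$ is identical in both.
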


\begin{proof} Since $G \subset {\rm SL}_3(\CC)$, each element $g \in G$, $g \neq {\rm id}_G$, has a 1-dimensional or 0-dimensional fixed locus. The elements $g \in G$ which fix the coordinate $x_i$ form a cyclic group $H_i$ of order $|H_i|$. Let $g \in H_i$, $g \neq {\rm id}_G$. For simplicity, we assume that $i=1$. Then $g=(0,{\bf e}[\frac{a_2}{p_2}], {\bf e}[\frac{a_3}{p_3}])$. We have
\begin{eqnarray*}
E_g(f,G)(t,\bar{t}) & = &
 - t\bar{t} \left(\frac{\bar{t}}{t}\right)^{-\frac{3}{2}}  
 \frac{1}{|G|} \sum_{h\in G}
\frac{1-{\bf e}\left[(1-\frac{1}{p_1})b_1 \right]\left(\frac{\bar{t}}{t}\right)^{1-\frac{1}{p_1}}}
{1-{\bf e}\left[\frac{b_1}{p_1}\right]\left(\frac{\bar{t}}{t}\right)^{\frac{1}{p_1}}} 
{\bf e}\left[\frac{b_1}{p_1}\right] \left(\frac{\bar{t}}{t}\right)^{\frac{1}{p_1}} \\
& = &  -  t\bar{t} \left(\frac{\bar{t}}{t}\right)^{-\frac{3}{2}} 
 \frac{1}{|G|}\sum_{h\in G} \left( 
 \sum_{k=0}^{p_1-2} {\bf e}\left[k\frac{b_1}{p_1}\right] \left(\frac{\bar{t}}{t}\right)^{\frac{k}{p_1}} \right) 
 {\bf e}\left[\frac{b_1}{p_1}\right] \left(\frac{\bar{t}}{t}\right)^{\frac{1}{p_1}} \\
& = & -  t\bar{t} \left(\frac{\bar{t}}{t}\right)^{-\frac{3}{2}}
\frac{1}{|G|}\sum_{h\in G} \left(
 \sum_{k=0}^{p_1-2} {\bf e}\left[(k+1)\frac{b_1}{p_1}\right] \left(\frac{\bar{t}}{t}\right)^{\frac{k+1}{p_1}} \right).
\end{eqnarray*}
Since $G$ is a finite group, we have for any character $\chi: G \to \CC^\ast$
\begin{equation}
\sum_{h \in G} \chi(h) = \left\{ \begin{array}{cl} |G| & \mbox{if } \chi(h)=1 \mbox{ for all } h \in G,\\ 0 & \mbox{otherwise}. \end{array} \right.
\end{equation}
Therefore
\begin{eqnarray*}
E_g(f,G)(t,\bar{t}) & = &
 t\bar{t} \left(\frac{\bar{t}}{t}\right)^{-\frac{3}{2}}  
\sum_{\ell} \left(\frac{\bar{t}}{t}\right)^{\frac{\ell}{p_1}},
\end{eqnarray*}
where the sum runs over all $\ell$ with $1 \leq \ell \leq p_1-1$ and $\ell \frac{b_1}{p_1} \in \ZZ$ for all $h \in G$. Let $\widetilde{\gamma}_i := \frac{p_i}{|G/H_i|}$, $i=1,2,3$. 
Since $\frac{b_1}{p_1}=0$ for $h \in H_1$ and $G/H_1$ leaves the monomial $x^{p_1}$ invariant, 
we obtain
\begin{eqnarray*}
E_g(f,G)(t,\bar{t}) & = &
 t\bar{t} \left(\frac{\bar{t}}{t}\right)^{-\frac{3}{2}}  
\sum_{\ell=1}^{\widetilde{\gamma}_1-1}\left(\frac{\bar{t}}{t}\right)^{\frac{\ell}{\widetilde{\gamma}_1}}.
\end{eqnarray*}
This implies that the exponents of $f^g$ are
$\frac{1}{\widetilde{\gamma}_1}, \ldots, \frac{\widetilde{\gamma}_1-1}{\widetilde{\gamma}_1}$
and the characteristic polynomial is 
\[ \phi_g(f,G)(t) = \frac{t^{\widetilde{\gamma}_1}-1}{t-1}. \]
There are $|H_1|-1$ elements $g \in H_1$, $g \neq {\rm id}_G$, with this characteristic polynomial.

By a similar reasoning we obtain 
\[  \phi_{{\rm id}_G}(f,G) = (t-1)^2 \prod_{i=1}^3 \frac{t^{\widetilde{\gamma}_i}-1}{t-1}. \]
If $g \in G$ is an element with fixed locus $\{ 0\}$, then we obtain $\phi_g(f,G)(t)= (t-1)$. 
By Remark~\ref{rem:fix}, there are $2j_G$ elements in $G$ with fixed locus $\{ 0\}$. 
Therefore, we finally get
\[ \phi(f,G)(t) = \prod_{g \in G} \phi_g(f,G)(t)^{(-1)^{n_g-1}} = (t-1)^{2-2j_G} \prod_{i=1}^s \frac{t^{\gamma_i}-1}{t-1}. \]
\end{proof}

From Theorem~\ref{thm:cusp} we immediately obtain the following corollary:
\begin{corollary} \label{cor:mucusp}
Let $(f,G)$ be as in Theorem~\ref{thm:cusp}. Then the Milnor number of the pair $(f,G)$ is given by
\[ \mu_{(f,G)} = 2- 2j_G + \sum_{i=1}^s (\gamma_i -1). \]
\end{corollary}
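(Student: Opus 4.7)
The plan is to deduce this corollary by computing the degree of the characteristic polynomial produced in Theorem~\ref{thm:cusp}. By the definition of $\phi(f,G)(t)$ as a signed product
\[
\phi(f,G)(t) = \prod_{g\in G} \phi_g(f,G)(t)^{(-1)^{n_g-1}},
\]
and since each factor $\phi_g(f,G)(t)$ is a polynomial of degree $\mu_{f^g}$, the degree of $\phi(f,G)(t)$, viewed as a rational function, equals
\[
\sum_{g\in G}(-1)^{n_g-1}\mu_{f^g}=\mu_{(f,G)}.
\]

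Thus the first step is simply to take $\deg$ on both sides of the identity
\[
\phi(f,G)(t) = (t-1)^{2-2j_G}\prod_{i=1}^s\frac{t^{\gamma_i}-1}{t-1}
\]
established in Theorem~\ref{thm:cusp}. The factor $(t-1)^{2-2j_G}$ contributes degree $2-2j_G$ (which may be negative, but we track it as a rational-function degree), and each cyclotomic-type factor $(t^{\gamma_i}-1)/(t-1)$ contributes degree $\gamma_i-1$. Summing yields
\[
\mu_{(f,G)} = 2-2j_G + \sum_{i=1}^s(\gamma_i-1),
\]
which is the desired formula.

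There is essentially no obstacle: the corollary is a direct numerical consequence of the factorization already proved in Theorem~\ref{thm:cusp}, together with the compatibility of the signed product defining $\phi(f,G)$ with the signed alternating sum defining $\mu_{(f,G)}$. The only mild point to verify is that $\deg \phi_g(f,G)(t) = \mu_{f^g}$ for every $g\in G$, which holds because $\phi_g(f,G)(t)$ is by construction the characteristic polynomial of the Milnor monodromy on $H^{n_g-1}(X_{f^g},\CC)$, whose dimension is $\mu_{f^g}$. Once this matching of degrees is noted, the corollary follows immediately.
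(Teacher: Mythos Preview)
Your approach is correct and is precisely what the paper means by ``immediately obtain'': compute the degree of the rational function on both sides of the factorization in Theorem~\ref{thm:cusp}.

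One small point of care in your justification. You write that $\phi_g(f,G)(t)$ is ``the characteristic polynomial of the Milnor monodromy on $H^{n_g-1}(X_{f^g},\CC)$''. Strictly speaking, $\phi_g(f,G)$ is extracted from $E_g(f,G)(t,\bar t)$, which already incorporates the averaging $\frac{1}{|G|}\sum_{h\in G}$ and hence records only the $G$-invariant part $(\H_{f^g})^G$; its degree is $\dim_\CC(\H_{f^g})^G$, not the full Milnor number of $f^g$. Correspondingly, the symbol $\mu_{f^g}$ appearing in the definition of $\mu_{(f,G)}$ just above the corollary is to be read as $\deg\phi_g(f,G)$ (this is the reading consistent with the paper's setup and with the corollary being ``immediate''). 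With that understanding your degree count goes through verbatim and matches the paper's argument.
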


\section{Gabrielov numbers for pairs $(f^T,G^T)$} \label{sect:Gab}
Let $f(x,y,z)$ be an invertible polynomial and let $G \subset {\rm SL}_3(\CC) \cap G^{\rm fin}_f$. Let $(\gamma'_1,\gamma'_2,\gamma'_3):=(\gamma_1^{(f, \{ 1 \})}, \gamma_2^{(f, \{ 1 \})}, \gamma_3^{(f, \{ 1 \})})$ be the Gabrielov numbers of the pair $(f, \{ 1 \})$ defined in \cite{ET}.   Let $\Delta(\gamma'_1,\gamma'_2,\gamma'_3):= \gamma'_1\gamma'_2\gamma'_3- \gamma'_2 \gamma'_3-\gamma'_1 \gamma'_3 - \gamma'_1 \gamma'_2$. If $\Delta(\gamma'_1,\gamma'_2,\gamma'_3) \geq 0$, then there is holomorphic coordinate change of the polynomial $f(x,y,z) +axyz$ for some $a \in \CC^\ast$ ($a=-1$ if $\Delta(\gamma'_1,\gamma'_2,\gamma'_3) > 0$ )  to the polynomial $F(x,y,z):=x^{\gamma'_1} + y^{\gamma'_2} + z^{\gamma'_3} -xyz$ of type $T_{\gamma'_1,\gamma'_2,\gamma'_3}$ (see \cite[Theorem~10(ii),(iii)]{ET}). Since $G \subset {\rm SL}_3(\CC)$, the polynomial $f(x,y,z) +axyz$ is also left invariant by $G$.

\begin{proposition}
The coordinate change from the polynomial $f(x,y,z)+axyz$ to 
the polynomial $F(x,y,z)=x^{\gamma'_1} + y^{\gamma'_2} + z^{\gamma'_3} -xyz$  is $G$-equivariant. In particular, the polynomial $F(x,y,z)$ is also left invariant by $G$.
\end{proposition}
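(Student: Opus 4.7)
The plan is case-by-case verification across the five types of invertible polynomials in three variables from Table~\ref{TabAppendix1}, together with inspection of the explicit coordinate change constructed in \cite[Proof of Theorem~10]{ET}. The starting observation is that since $G \subset \mathrm{SL}_3(\CC) \cap G_f^{\rm fin}$, every element $g \in G$ acts diagonally as $(\lambda_1, \lambda_2, \lambda_3)$ with $\lambda_1\lambda_2\lambda_3 = 1$ and fixes $f$; consequently $g$ fixes the monomial $xyz$, so $f + axyz$ is $G$-invariant and the equivariance question is well-posed.

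For Types I and III, the polynomial $f + axyz$ already has the form $F$ up to the coefficient of $xyz$, and one rescales the variables by constants $(c_1,c_2,c_3) \in (\CC^\ast)^3$ to normalize this coefficient to $-1$; such a diagonal rescaling manifestly commutes with the diagonal $G$-action. For Types II, IV, V, the substitution $\phi_i(x,y,z)$ from \cite{ET} is weighted-homogeneous of the same $W_f$-weighted degree as $x_i$, and I would check by direct inspection that every monomial appearing in $\phi_i - x_i$ transforms under $G$ by the same character as $x_i$. The key input is that the monomials which appear arise from reorganizing the terms of $f$ and $xyz$, both of which are $G$-invariant, so the correction terms live in $G$-isotypic components matching $x_i$.

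I expect the main obstacle to lie in Type V (loop), where all three variables mix in the substitution and the case-by-case bookkeeping is most delicate. Here I would argue more uniformly via an inductive construction of $\phi$ as a formal power series $\phi = \mathrm{id} + \phi_{(2)} + \phi_{(3)} + \cdots$ filtered by $W_f$-weighted degree. At each step, $\phi_{(k)}$ is determined up to the kernel of a linear equation $dF \cdot \phi_{(k)} = \psi_{(k)}$ where $\psi_{(k)}$ is a polynomial determined by the previous data. Since both $f + axyz$ and the already-constructed lower-order part of $\phi$ can be kept $G$-invariant by induction, $\psi_{(k)}$ is $G$-invariant and the solution space is a $G$-submodule of the space of weighted-homogeneous polynomials; averaging any particular solution over the finite abelian group $G$ yields a $G$-equivariant $\phi_{(k)}$. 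The resulting formal power series agrees with the analytic coordinate change of \cite{ET} up to a $G$-equivariant automorphism of $F$, which completes the proof.
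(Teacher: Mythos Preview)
Your primary strategy---checking that each monomial appearing in the substitution $\phi_i - x_i$ carries the same $G$-character as $x_i$---is exactly the paper's approach. The paper carries this out only for Type~II: the first substitution is $(x,y,z)\mapsto(x+z^{p_3/p_2-1},y,z)$, and the equivariance check amounts to observing that $x^{-1}z^{p_3/p_2-1}$ is the ratio of the two $G$-invariant monomials $yz^{p_3/p_2}$ (a monomial of $f$) and $xyz$ (invariant since $G\subset\mathrm{SL}_3$). This is precisely your ``reorganizing the terms of $f$ and $xyz$'' heuristic, written in exponent-vector form via $E^{-1}$. The subsequent substitutions in \cite{ET} are of the same shape, so the same argument applies iteratively.

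Two corrections. First, Type~III does \emph{not} already have the form of $F$: for $f=x^{p_1}+zy^{q_2+1}+yz^{q_3+1}$ a nontrivial coordinate change is needed, so Type~III belongs with II, IV, V rather than with I. Second, your alternative averaging argument for Type~V is circular as written. The linearized equation $dF\cdot\phi_{(k)}=\psi_{(k)}$ and the assertion that its solution space is a $G$-submodule both presuppose that $F$ is $G$-invariant, which is the conclusion you are after; likewise $\psi_{(k)}$ depends on the Taylor coefficients of $F$, not only on $f+axyz$ and the lower-order $\phi_{(j)}$. One could repair this by running the infinitesimal argument from the $f+axyz$ side (using $d(f+axyz)$, which is $G$-equivariant), but then one must still identify the resulting normal form with $F$. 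In any case the detour is unnecessary: the direct character check works for Type~V just as for the others, since each substitution monomial in the \cite{ET} construction is again a ratio of monomials of $f$ and powers of $xyz$.
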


\begin{proof} For simplicity, we assume $a=-1$ and we prove the statement for an invertible polynomial of Type II. The proof in the other cases is similar.
For Type II we have $f(x,y,z)-xyz=x^{p_1}+y^{p_2}+yz^{\frac{p_3}{p_2}}-xyz$ and $F(x,y,z)=x^{p_1}+y^{p_2}+z^{p_1(\frac{p_3}{p_2}-1)}-xyz$ (see \cite{ET}). We have to do coordinate changes to get rid of the mixed term. As the first and main step, we use the coordinate change $(x,y,z) \mapsto (x+z^{\frac{p_3}{p_2}-1},y,z)$. 
We show that this transformation is $G$-equivariant.
Let $g=\sigma_1^{s_1} \sigma_2^{s_2} \sigma_3^{s_3} \in G$. Since $g \in {\rm SL}_3(\CC)$, we have
\begin{equation}
   (1,1,1) E^{-1} (s_1, s_2, s_3)^T =  \sum_{i=1}^3 s_i q_i^T \in \ZZ, \label{eq:111}
\end{equation}
where $q_i^T= \frac{w_i^T}{d}$ ($i=1,2,3$) for the canonical system of weights $W_{f^T}=(w_1^T,w_2^T,w_3^T;d)$ associated to $f^T$. 
Since $f$ is invariant under $G$, we have 
\begin{equation}
    (0,1,\textstyle{\frac{p_3}{p_2}}) E^{-1} (s_1, s_2, s_3)^T  \in \ZZ. \label{eq:a1}
\end{equation}
Subtracting (\ref{eq:111}) from (\ref{eq:a1}) yields
\begin{equation}
 (-1,0,\textstyle{\frac{p_3}{p_2}}-1) E^{-1} (s_1, s_2, s_3)^T  \in \ZZ. 
\end{equation}
This shows that this coordinate change is $G$-equivariant. This coordinate change introduces new mixed terms of higher degree. By applying further coordinate changes of the same type we can push the degree of the mixed terms arbitrarily high, see the proof of \cite[Theorem~10(iii)]{ET}.
\end{proof}

\begin{definition}
The {\em Gabrielov numbers} of the pair $(f,G)$ are the numbers $(\gamma_1, \ldots, \gamma_s)$ for the pair $(F,G)$ defined in Theorem~\ref{thm:cusp}.
We denote the Gabrielov numbers of the pair $(f,G)$ by $\Gamma_{(f,G)}$.
\end{definition}

\section{Mirror symmetry} \label{sect:mirror}
Let $f(x,y,z)$ be an invertible polynomial and $G \subset G^{\rm fin}_f$ be a subgroup with $G_0 \subset G \subset G^{\rm fin}_f$. Then $(G^{\rm fin}_f)^T=\{ 1\} \subset G^T \subset G_0^T = {\rm SL}_3(\CC) \cap G^{\rm fin}_{f^T}$. In Section~\ref{sect:Dol} we defined Dolgachev numbers $A_{(f,G)}$ and the genus $g_{(f,G)}$ for the pair $(f,G)$. By Section~\ref{sect:Gab}, there are Gabrielov numbers $\Gamma_{(f^T,G^T)}$ defined for the pair $(f^T,G^T)$. The number $j_{G^T}$ was defined in Section~\ref{sect:g0}.

\begin{theorem} \label{thm:mirror}
We have
\[ g_{(f,G)} = j_{G^T}, \quad A_{(f,G)} = \Gamma_{(f^T,G^T)}, \quad 
e_{\rm st}({{\mathcal C}_{(f,G)}})=\mu_{(F,G^T)}, \]
where $F$ is a polynomial of type $T_{\gamma'_1,\gamma'_2,\gamma'_3}$ with 
the Gabrielov numbers $(\gamma'_1,\gamma'_2,\gamma'_3)$ for $f^T$.
\end{theorem}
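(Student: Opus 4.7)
The plan is to derive the three equalities in Theorem~\ref{thm:mirror} by chaining together results already established in the preceding sections; no genuinely new computation is needed.

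\textbf{Step 1: The genus identity $g_{(f,G)} = j_{G^T}$.} This is a direct combination of Proposition~\ref{prop:gjac} and Proposition~\ref{Prop:g=j}. The first rewrites the genus of $\mathcal{C}_{(f,G)}$ as $\dim_\CC(\Omega_{f,1})^G$, and the second identifies this invariant space with the set of junior elements of $G^T$ having fixed locus $\{0\}$. Since $G_0 \subset G$, the hypotheses of Proposition~\ref{Prop:g=j} are met, so the identification is immediate.

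\textbf{Step 2: The matching of Dolgachev and Gabrielov numbers $A_{(f,G)} = \Gamma_{(f^T,G^T)}$.} I would compare the formulas on the two sides term by term. The Dolgachev numbers are given by Theorem~\ref{thm:Dol} in terms of the numbers $\alpha'_i = A_{(f,G_f^{\rm fin})}$ and the intermediate group $H_i$. The Gabrielov numbers $\Gamma_{(f^T,G^T)}$ are, by definition, the output of Theorem~\ref{thm:cusp} applied to the pair $(F,G^T)$, with $F$ of type $T_{\gamma'_1,\gamma'_2,\gamma'_3}$; they thus have the form $\frac{\gamma'_i}{|G^T/H_i^T|} \ast |H_i^T|$ with $H_i^T \subset G^T$ the maximal subgroup fixing the $i$-th coordinate. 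The dictionary between these two presentations is Proposition~\ref{prop:HT}, which provides the orderings so that $H_i^T$ really is this maximal stabilizer and gives the identifications
\[
|G^T/H_i^T| \;=\; |H_i/G|, \qquad |H_i^T| \;=\; |G_f^{\rm fin}/H_i|,
\]
the second of these coming from Proposition~\ref{Prop:Gfin/G} applied to $H_i$. Finally one invokes the strange duality from \cite{ET} (the case $G=G_f^{\rm fin}$), which gives $\alpha'_i = \gamma'_i$. Substituting all three identifications shows that the multisets on the two sides agree.

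\textbf{Step 3: The Euler / Milnor number equality.} Having established Steps 1 and 2, combine Proposition~\ref{cor:chiDol} and Corollary~\ref{cor:mucusp}:
\[
e_{\rm st}(\mathcal{C}_{(f,G)}) = 2 - 2g_{(f,G)} + \sum_{i=1}^r (\alpha_i - 1), \qquad \mu_{(F,G^T)} = 2 - 2j_{G^T} + \sum_{i=1}^s (\gamma_i - 1).
\]
By Step~1 the initial $2-2g_{(f,G)}$ and $2-2j_{G^T}$ terms match, and by Step~2 the two sums coincide (same multisets of factors, so $r=s$ and $\{\alpha_i\} = \{\gamma_i\}$).

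\textbf{Main obstacle.} The only delicate point is the bookkeeping in Step~2: one must be sure that the groups called $H_i$ on the Dolgachev side and $H_i$ on the Gabrielov (cusp) side are matched up by the duality $H \mapsto H^T$, and that the orderings of the three distinguished points/coordinates line up. This is exactly what Proposition~\ref{prop:HT} is designed to supply; everything else is a substitution. Once that dictionary is in place, the three parts of the theorem are really corollaries of the machinery developed in Sections \ref{sect:g0}--\ref{sect:Gab}.
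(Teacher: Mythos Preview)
Your proposal is correct and follows essentially the same route as the paper's own proof: Step~1 combines Proposition~\ref{prop:gjac} with Proposition~\ref{Prop:g=j}, Step~2 matches Theorem~\ref{thm:Dol} against Theorem~\ref{thm:cusp} via the dictionary of Proposition~\ref{prop:HT} together with the base case $\alpha'_i=\gamma'_i$ from \cite{ET}, and Step~3 then feeds the first two steps into Proposition~\ref{cor:chiDol} and Corollary~\ref{cor:mucusp}. Your explicit spelling out of the index identifications $|G^T/H_i^T|=|H_i/G|$ and $|H_i^T|=|G_f^{\rm fin}/H_i|$ is exactly the bookkeeping the paper leaves to the reader.
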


\begin{proof} The first equality $g_{(f,G)} = j_{G^T}$ follows from Proposition~\ref{prop:gjac} and Proposition~\ref{Prop:g=j}. 

\begin{sloppypar}

We now prove that the Dolgachev numbers of the pair $(f,G)$ coincide with the Gabrielov numbers for the pair $(f^T,G^T)$. By \cite{ET},  the Dolgachev numbers $(\alpha_1^{(f,G_f^{\rm fin})}, \alpha_2^{(f,G_f^{\rm fin})}, \alpha_3^{(f,G_f^{\rm fin})})$ for the pair $(f,G_f^{\rm fin})$ coincide with the Gabrielov numbers $(\gamma_1^{(f^T, \{ 1 \})}, \gamma_2^{(f^T, \{ 1 \})}, \gamma_3^{(f^T, \{ 1 \})})$ of the pair $(f^T, \{ 1 \})$. By Proposition~\ref{Prop:Gfin/G} we have natural group isomorphisms
\[ G^{\rm fin}_f/G \simeq {\rm Hom}(G^{\rm fin}_f/G, \CC^\ast) \simeq G^T. \]
By Proposition~\ref{prop:HT}, there is an ordering of the isotropic points of ${\mathcal C}_{(f,G)}$ such that, if $G \subset H_i \subset G_f^{\rm fin}$ is the minimal subgroup containing the isotropy group of the $i$-th point and $K_i \subset G_0^T$ is the maximal subgroup fixing the $i$-th coordinate, we have $H_i^T \simeq K_i$ and $G^T/H_i^T \simeq {\rm Hom}(H_i/G, \CC^\ast)$, $i=1,2,3$.
Therefore the claim follows from Theorem~\ref{thm:Dol} and Theorem~\ref{thm:cusp}.

\end{sloppypar}

Finally, the equality $e_{\rm st}({{\mathcal C}_{(f,G)}})=\mu_{(F,G^T)}$ follows from the previous statements, Proposition~\ref{cor:chiDol}, and Corollary~\ref{cor:mucusp}.
\end{proof}

Let $(w_1,w_2,w_3;d)$ be the canonical system of weights attached to $f$ and let $c=|G^{\rm fin}_f/G|$. 
The ring $R_{(f,G)}:=\CC[x,y,z]/(f)$ is a $\ZZ$-graded ring with respect to the system of 
weights $(\frac{w_1}{c},\frac{w_2}{c},\frac{w_3}{c};\frac{d}{c})$.
Therefore, we can consider the decomposition of $R_{(f,G)}$ 
as a $\ZZ$-graded $\CC$-vector space:
\[
R_{(f,G)}:=\bigoplus_{k\in\ZZ_{\ge 0}} R_{(f,G),k}, \quad R_{(f,G),k}:=\left\{g\in R_{(f,G)}~\left|~
\frac{w_1}{c}x\frac{\partial g}{\partial x}+\frac{w_2}{c}y\frac{\partial g}{\partial y}
+\frac{w_3}{c}z\frac{\partial g}{\partial z}=k g\right.\right\}.
\]
\begin{definition}
The formal power series
\begin{equation}
p_{(f,G)}(t):=\sum_{k\ge 0} (\dim_\CC R_{(f,G),k}) t^k
\end{equation}
is called the {\em Poincar\'e series} corresponding to the pair $(f,G)$.
\end{definition}
It is easy to see that
the Poincar\'e series $p_{(f,G)}(t)$ is given by  
\[
p_{(f,G)}(t)=\frac{(1-t^{\frac{d}{c}})}{(1-t^{\frac{w_1}{c}})(1-t^{\frac{w_2}{c}})(1-t^{\frac{w_3}{c}})}
\]
and defines a rational function. In particular, the series $p_{(f,G_0)}(t)$ is the usual Poincar\'e series of the $\CC^\ast$-action on $f$ given by the reduced system of weights.

\begin{definition} Let $A_{f,G}=(\alpha_1, \ldots, \alpha_r)$ be the Dolgachev numbers of the pair $(f,G)$ and $g=g_{(f,G)}$. Define
\[\psi_{(f,G)}(t) := p_{(f,G)}(t) (1-t)^{2-2g}\prod_{i=1}^r \frac{1-t^{\alpha_i}}{1-t}. \]
\end{definition}

Note that, according to Theorem~\ref{thm:mirror}, 
\[ \phi(F,G_0^T)(t) = (1-t)^{2-2g}\prod_{i=1}^r \frac{1-t^{\alpha_i}}{1-t} \]
where $F$ is a polynomial of type $T_{\gamma'_1,\gamma'_2,\gamma'_3}$ with 
the Gabrielov numbers $(\gamma'_1,\gamma'_2,\gamma'_3)$ for $f^T$.

The polynomials $\psi_{(f,G_0)}(t)$ are listed in Table~\ref{phimon}. Here $c=c_f$ and we use the simplified notation
\[
i_1\cdot i_2\cdot\ \dots\ \cdot i_L\left/j_1\cdot j_2\cdot\ \dots\ \cdot j_M\right. .
\]
for a rational function of the form 
\[
\frac{\prod_{l=1}^L(1-t^{i_l})}{\prod_{m=1}^M(1-t^{j_m})}.
\]

\begin{table}[h]
\begin{center}
\begin{tabular}{|c||c|}
\hline
Type & $\psi_{(f,G_0)}(t)$ \\
\hline
\hline
I & $\left(\frac{p_1c_1}{c}\right)^{c_1}\cdot \left(\frac{p_2c_2}{c}\right)^{c_2} \cdot \left(\frac{p_3c_3}{c}\right)^{c_3}\cdot \frac{p_1p_2p_3}{c}\left/1^{c_1+c_2+c_3-2+2g}\cdot \frac{p_2p_3}{c} \cdot \frac{p_3p_1}{c} \cdot \frac{p_1p_2}{c} \right. $\\
 & $c_1 = (p_2, p_3), c_2 = (p_1, p_3), c_3 = (p_1, p_2)$ \\
\hline
II & $\left(\frac{p_1c_1}{c}\right)^{c_1}\cdot \left(\frac{p_3c_2}{p_2c}\right)^{c_2}\cdot  \frac{p_1p_3}{c}  \left/1^{c_1+c_2-1+2g} \cdot  \frac{p_3}{c} \cdot \frac{p_1p_3}{p_2c}  \right. $ \\
 & $c_1 = ( \frac{p_3}{p_2}, p_2-1), c_2 = (p_1, p_2)$ \\
\hline
III & $\left(\frac{p_1c_1}{c}\right)^{c_1}\cdot \frac{p_1p_2}{c}  \left/1^{c_1+2g}\cdot \frac{p_2}{c}  \right. $\\
 & $c_1 = ( q_2,q_3)$ \\
\hline
IV & $\left(\frac{p_3c_1}{p_2c}\right)^{c_1}\cdot \frac{p_3}{c}  \left/1^{c_1+2g}\cdot \frac{p_3}{p_1c} \right. $\\
 & $c_1 = (\frac{p_2}{p_1}, p_1-1)$ \\
\hline
V & $\frac{q_1q_2q_3+1}{c}  \left/1^{1+2g}\right. $\\
\hline
\end{tabular}
\end{center}
\caption{The polynomials $\psi_{(f,G_0)}(t)$}\label{phimon}
\end{table}

\begin{theorem} \label{thm:Poincare}
Let $f$ be an invertible polynomial with $c_f=c_{f^T}$ and $g=g_{(f,G_0)}=0$.
Then we have
\[ \psi_{(f,G_0)}(t) = \phi(f^T, G_0^T)(t).
\]
\end{theorem}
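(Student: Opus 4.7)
The strategy is case-by-case verification according to the five types in Table~\ref{TabAppendix1}, using Theorem~\ref{thm:EGqh} as the main tool to expand $\phi(f^T,G_0^T)(t)$ as an explicit rational function and compare with the closed form of $\psi_{(f,G_0)}(t)$ in Table~\ref{phimon}.

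First I would expand $\phi(f^T,G_0^T)(t) = \prod_{g \in G_0^T}\phi_g(f^T,G_0^T)(t)^{(-1)^{n_g-1}}$ using the description of $G_0^T$ afforded by Propositions~\ref{Prop:ordJT} and~\ref{Prop:genJT}: $|G_0^T| = c_f = c_{f^T}$ by hypothesis, and in the chain and loop cases $G_0^T$ is cyclic with explicit generator. The hypothesis $g=0$ together with Theorem~\ref{thm:mirror} forces $j_{G_0^T}=0$, so by Remark~\ref{rem:fix} no element $g \in G_0^T$ has $\mathrm{Fix}(g)=\{0\}$; the only non-identity sectors come from elements whose fixed locus is a one-dimensional coordinate axis. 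For such $g$ the formula of Theorem~\ref{thm:EGqh} collapses, after averaging over $h \in G_0^T$, to a characteristic polynomial of $f^T$ restricted to that axis, producing factors of cyclotomic type $(1-t^{\gamma_i})/(1-t)$.

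Second, by Theorem~\ref{thm:mirror} the Dolgachev numbers $\alpha_i$ on the LHS coincide with the Gabrielov numbers $\gamma_i$ of $(f^T,G_0^T)$, so after extracting the non-identity-sector contributions from $\phi(f^T, G_0^T)(t)$ and identifying them with $\prod_i (1-t^{\alpha_i})/(1-t)$ on the LHS, the remaining identity reduces to
\[
\phi_{\mathrm{id}}(f^T, G_0^T)(t) = p_{(f,G_0)}(t) \cdot (1-t)^2.
\]
This is a $G_0^T$-equivariant analogue of the classical relation between the Poincar\'e series of a weighted homogeneous singularity and its Milnor characteristic polynomial, with the LHS coming from averaging Proposition~\ref{prop:Eqh} over $G_0^T$. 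The hypothesis $c_f = c_{f^T}$ is essential here so that the denominator of $p_{(f,G_0)}(t)$, which involves $w_i/c_f$, matches the corresponding denominator for $f^T$ involving $w_i^T/c_{f^T}$ after inverting the exponent matrix.

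The main obstacle is the mixed types II and IV, where the cyclic group $G_0^T$ acts asymmetrically on the variables of $f^T$ and the restrictions of $f^T$ to coordinate axes are of mixed Fermat/chain nature, making the combinatorial accounting of multiplicities the most intricate. Types I, III, and V admit more symmetric analyses that make the matching with Table~\ref{phimon} comparatively direct; type V in particular is clean because $G_0^T$ rotates all three coordinates simultaneously. Throughout, the passage from the a priori summation in Theorem~\ref{thm:EGqh} to the product form in Table~\ref{phimon} requires careful cancellation of cyclotomic factors using the explicit weights of $f$ and $f^T$.
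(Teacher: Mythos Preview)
There is a genuine gap in step~2. You assert that the non-identity sectors of $\phi(f^T,G_0^T)$ contribute exactly $\prod_i (1-t^{\gamma_i})/(1-t)$ with $\gamma_i$ the Gabrielov numbers of $(f^T,G_0^T)$, but these numbers are defined via the cusp polynomial $F(x,y,z)=x^{\gamma_1'}+y^{\gamma_2'}+z^{\gamma_3'}-xyz$, \emph{not} via $f^T$. Theorem~\ref{thm:cusp} computes the twisted sectors of $\phi(F,G)$, and there the restriction of $F$ to the $x_i$-axis is the Fermat monomial $x_i^{\gamma_i'}$. For the weighted homogeneous $f^T$ the formula of Theorem~\ref{thm:EGqh} at a sector $g$ fixing the $x_i$-axis instead sees the weight $q_i^T$ of $f^T$, and $1/q_i^T$ has nothing to do with $\gamma_i'$ in general. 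Moreover, the Gabrielov tuple typically has \emph{more} entries than there are non-identity elements of $G_0^T$, so the factor counts cannot match.

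A concrete failure: take $f(x,y,z)=x^{6}y+y^{3}+z^{2}$ (the head $J_{3,0}$ of Table~\ref{TabBi}), so $f^T=x^{6}+xy^{3}+z^{2}$, $c_f=c_{f^T}=2$, $G_0^T\simeq\ZZ/2\ZZ$ acting by $(x,y,z)\mapsto(-x,-y,z)$, and $A_{(f,G_0)}=\Gamma_{(f^T,G_0^T)}=(2,2,2,3)$. The single non-identity element fixes the $z$-axis, on which $f^T|_{z\text{-axis}}=z^2$, and Theorem~\ref{thm:EGqh} gives $\phi_g(f^T,G_0^T)(t)=t+1$. But $\prod_i(1-t^{\gamma_i})/(1-t)=(t+1)^3(t^2+t+1)$, so your identification fails. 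Correspondingly your ``reduced identity'' $\phi_{\mathrm{id}}(f^T,G_0^T)(t)=p_{(f,G_0)}(t)(1-t)^2$ is false here: the left side has degree $7$ (the $G_0^T$-invariant part of a $13$-dimensional Jacobian) while the right side has degree $3$.

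The paper avoids this sector-by-sector matching entirely. Instead it observes that both $\psi_{(f,G_0)}$ and $\phi(f^T,G_0^T)$ are products of cyclotomic factors $(1-t^i)^{e(i)}$ with $i\mid \widetilde d$, and compares them via the Newton power sums $\Lambda_k$, $\widetilde\Lambda_k$ of their roots. The key identity is $\widetilde\Lambda_k=\chi(f^T,G_0^T)({\bf e}[k/\widetilde d])$, where $\chi$ is built from the $E_g$; evaluating at roots of unity turns the rational factors in Theorem~\ref{thm:EGqh} into a sum of delta-function conditions on residues of $(b_i+k)\widetilde w_i$ modulo $\widetilde d$. These are then matched, type by type, against $\Lambda_k=\sum_{m\mid k} m\,e(m)$ read off from Table~\ref{phimon}. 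The point is that the comparison happens at the level of \emph{all} sectors summed together, not factored; no clean splitting of the kind you propose exists.
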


\begin{proof}[Proof of Theorem \ref{thm:Poincare}]
Let $c=c_f=c_{f^T}$ and $(\frac{w_1}{c},\frac{w_2}{c},\frac{w_3}{c};\frac{d}{c})$ be the reduced system of weights of $f$. Set $\widetilde{d}:=\frac{d}{c}$.
Table~\ref{phimon} shows that the polynomial $\psi_{(f,G_0)}(t)$ is of the form
\[ \psi_{(f,G_0)}(t) = \prod_{i |\widetilde{d}} (1-t^i)^{e(i)}, \quad e(i) \in \ZZ.
\]
Let $\omega_i$, $i=1, \ldots, \nu$, be the roots of this polynomial and put
\[ \Lambda_k := \omega_1^k + \cdots + \omega_\nu^k, \quad \mbox{for } k \in \ZZ_{\geq 0}. \]
Then 
\[ \Lambda_k = \sum_{m|k} m e(m). \]

On the other hand, let $(\widetilde{w}_1,\widetilde{w}_2,\widetilde{w}_3;\widetilde{d})$ be the reduced system of weights of $f^T$. Let $\widetilde{\omega}_i$, $i=1, \ldots, \widetilde{\nu}$, be the zeros of the polynomial $\phi_{(f^T, G_0^T)}(t)$ and put
\[ \widetilde{\Lambda}_k := \widetilde{\omega}_1^k + \cdots + \widetilde{\omega}_{\widetilde{\nu}}^k, \quad \mbox{for } k \in \ZZ_{\geq 0}. \]
Set $T^{\widetilde{d}}=\frac{\bar{t}}{t}$. Define a rational function $\chi(f^T, G_0^T)(T)$ as follows:
\begin{eqnarray*}
\chi(f^T, G_0^T)(T) & := & \sum_{g \in G_0^T} \chi_g(f^T, G_0^T)(T), \\
\chi_g(f^T, G_0^T)(T) & := & (-1)^{n_g} \left(\frac{\bar{t}}{t}\right)^{\frac{n_g}{2}} E_g(f^T, G_0^T)(t, \bar{t}).
\end{eqnarray*}
Then we have
\[ \widetilde{\Lambda}_k = \chi(f^T,G_0^T)\left({\bf e}\left[\frac{k}{\widetilde{d}}\right]\right)=
\sum_{g \in G_0^T} \chi_g(f^T,G_0^T)\left({\bf e}\left[\frac{k}{\widetilde{d}}\right]\right).
\]
From Theorem~\ref{thm:EGqh}, we can derive
\[ \chi_g(f^T,G_0^T)\left({\bf e}\left[\frac{k}{\widetilde{d}}\right]\right) =
(-1)^{n_g} \frac{1}{c}\sum_{h\in G_0^T}\left(\prod_{i:\frac{a_i}{p_i}\in\ZZ} \left( \delta((b_i + k) \widetilde{w}_i \,\mbox{mod}\, \widetilde{d}) \frac{\widetilde{d}}{\widetilde{w}_i} -1 \right) \right),
\]
where $\delta$ is the delta function, i.e., $\delta(0):=1$ and $\delta(x):=0$ for $x \neq 0$.
By a case by case study based on the proof of Theorem~\ref{thm:strange} and Table~\ref{phimon}, one can show that
\[ \Lambda_k = \widetilde{\Lambda}_k \mbox{ for all } k \in \ZZ_{\geq 0}. \]
This implies that $\psi_{(f,G_0)}(t)=\phi(f^T, G_0^T)(t)$.
\end{proof}

\begin{remark} Let $c_f=1$ and $c_{f^T}$ be arbitrary. Then it was shown in \cite[Theorem~22]{ET} that $\psi_{(f,G_0)}(t)$ is the characteristic polynomial of an operator $\tau$ such that $\tau^{c_{f^T}}$ is the monodromy of the singularity $f^T$. If $c_f$ is arbitrary and $c_{f^T}=1$, then one can derive from Table 10 and Table 11 of \cite{ET} that $\psi_{(f,G_f^{\rm fin})}(t)= \phi(f^T, (G_f^{\rm fin})^T)(t)=\phi(f^T, \{ {\rm id} \})(t)$.
\end{remark}

\section{Examples} \label{sect:Ex}
The classification of invertible polynomials with $\Delta(\Gamma_{(f^T,\{1\})}) < 0$ is given in 
Table~\ref{TabSpherical}. 
Note that $j_H=0$ for all subgroups $H$ of $G_0^T$.
In this case, we can describe explicitly the geometry of the proper transform of 
$(f^T)^{-1}(0)/H$ in the crepant resolution of $\CC^3/H$.
Consider the example $f^T(x,y,z)=x^2+y^3+z^4$. 
Then the crepant resolution of $\CC^3/(\ZZ/2\ZZ)$ is covered by two $\CC^3$'s. 
It is easily checked that the proper transform of $(f^T)^{-1}(0)/(\ZZ/2\ZZ)$ is smooth 
in one chart and in the other chart it is given by $x^2z+y^3+z^2=0$, which 
defines an $E_6$ singularity of Type II.
Thus, the geometry of Lagrangian vanishing cycles for $f^T(x,y,z)=x^2+y^3+z^4$ with 
the group $(\ZZ/2\ZZ)$ is equivalent to the one for $x^2z+y^3+z^2$ with the trivial group.
We denote this equivalence by $(E_{6}^I,\ZZ/2\ZZ)\simeq (E_{6}^{II},\{1\})$. 
Similarly, we get equivalences 
$(A_{2k-1}^I,\ZZ/2\ZZ)\simeq (D_{k+1}^{II},\{1\})$ where $\ZZ/2\ZZ$ acts as 
$(x,z)\mapsto (-x,-z)$ or $(y,z)\mapsto (-y,-z)$, 
$(D_{4}^I,\ZZ/3\ZZ)\simeq (D_{4}^{III},\{1\})$,
$(A_{4k-1}^{II},\ZZ/2\ZZ)\simeq (D_{2k+1}^{IV},\{1\})$,
$(A_{4k+1}^{II},\ZZ/2\ZZ)\simeq (D_{2k+2}^{III},\{1\})$, and 
$(D_k^{II},\ZZ/2\ZZ)\simeq (A_{2k-3}^{IV},\{1\})$.
In the example for $f^T(x,y,z)=x^2+y^2+z^{l}$ on which $\ZZ/2\ZZ$ acts as $(x,y)\mapsto (-x,-y)$, 
the proper transform of $(f^T)^{-1}(0)/(\ZZ/2\ZZ)$ in some chart 
is given by $x^2y+y+z^{l+1}=0$, which defines two $A_{l}$-singularities. 
Note that the polynomial $x^2y+y+z^{l+1}-xyz$ can be deformed to a polynomial 
of Type $T_{l+1,1,l+1}$ after the coordinate change $z\mapsto z+x$, 
which is of course compatible with Table~\ref{TabSpherical}, and also that 
the Coxeter--Dynkin diagram for a polynomial 
of Type $T_{l+1,1,l+1}$ naturally contains as a subdiagram the one for $A_l\times A_l$.
\begin{table}[h]
\begin{center}
\begin{tabular}{|c||c|c|c|c|c|}
\hline
{\rm Type} & $f^T(x,y,z)$ & $G_0^T$ & $\Gamma_{(f^T,\{1\})}$ & $\Gamma_{(f^T,G_0^T)}$ 
& {\rm Singularity}\\
\hline
\hline
I & $x^2+y^2+z^{2k+1}$, $k \geq 1$ & $\ZZ/2\ZZ$ & $2,2,2k+1$ & $2k+1,2k+1$& $A_{2k}$\\
 & $x^2+y^2+z^{2k}$, $k \geq 2$ & $(\ZZ/2\ZZ)^2$ & $2,2,2k$ & $k,k$ & $A_{2k-1}$\\
 & $x^2+y^3+z^3$ & $\ZZ/3\ZZ$ & $2,3,3$ & $2,2,2$ & $D_4$ \\
  & $x^2+y^3+z^4$ & $\ZZ/2\ZZ$ & $2,3,4$  & $2,3,3$ & $E_6$ \\
  & $x^2+y^3+z^5$ & $\{1\}$ & $2,3,5$ & $2,3,5$ & $E_8$ \\ 
\hline
II & $x^2+y^2+yz^k$, $k \geq 2$ & $\ZZ/2\ZZ$ & $2,2,2(k-1)$  & $2,2,k-1$ & $A_{2k-1}$\\
   & $x^2+y^{k-1} +yz^2$, $k \geq 4$ & $\ZZ/2\ZZ$ & $2,k-1,2$ & $k-1,k-1$ & $D_{k}$\\
   & $x^3+y^2+yz^2$ & $\{1\}$ & $3,2,3$ & $2,3,3$ & $E_6$\\
   & $x^2+y^3+yz^3$ & $\{1\}$ & $2,3,4$ & $2,3,4$ & $E_7$ \\
\hline
III & $x^2+zy^2+yz^{k+1}$, $k \geq 1$ & $\{1\}$ & $2,2,2k$ &  $2,2,2k$ & $D_{2k+2}$\\
\hline
IV & $x^l+xy+yz^k$, $k,l \geq 2$ & $\{1\}$ & $l, (k-1)l,1$ & $l, (k-1)l,1$ & $A_{kl-1}$\\
    & $x^2+xy^k+yz^2$, $k \geq 2$ & $\{1\}$ & $2,2,2k-1$ & $2,2,2k-1$ & $D_{2k+1}$ \\
\hline
V & $xy+y^kz+z^lx$, $k,l \geq 1$ & $\{1\}$ & $kl-k+1,1,k$ & $kl-k+1,1,k$ & $A_{kl}$\\
\hline
\end{tabular}
\end{center}
\caption{The cases $\Delta(\Gamma_{(f^T,\{1\})}) < 0$}\label{TabSpherical}
\end{table}

This is already different for invertible polynomials with $\Delta(\Gamma_{(f^T,\{1\})}) =0$.
As an example of such a polynomial we consider the polynomial $f(x,y,z)=f^T(x,y,z)= x^2+y^3+z^6$ defining a singularity of type $\widetilde{E}_8$. Here $G_0=\ZZ/6\ZZ$ and $G_f^{\rm fin}= \ZZ/2\ZZ \times \ZZ/3\ZZ \times \ZZ/6\ZZ$. The curve $C_{(f,G_0)}$ is a smooth elliptic curve. We introduce the following notation: Let $\alpha_1, \ldots , \alpha_r$ be positive integers with $\alpha_i \geq 2$ for $i=1, \ldots , r$. Denote by $\PP^1_{\alpha_1, \ldots , \alpha_r}$ the complex projective line with $r$ isotropic points of orders $\alpha_1, \ldots , \alpha_r$. Then ${\mathcal C}_{(f,G_f^{\rm fin})} = \PP^1_{2,3,6}$ and the mirror is the cusp singularity of type $T_{2,3,6}$. We have two non-trivial choices for $G$ with $G_0 \subset G \subset G_f^{\rm fin}$, namely $G/G_0= \ZZ/2\ZZ$ and $G/G_0=\ZZ/3\ZZ$. In the first case, the stack ${\mathcal C}_{(f,G)}$ is $\PP^1_{2,2,2,2}$ dual to the cusp singularity $T_{2,3,6}$ with the action of the group $\ZZ/3\ZZ$, in the second case it is  $\PP^1_{3,3,3}$ dual to the cusp singularity $T_{2,3,6}$ with the action of the group $\ZZ/2\ZZ$.

%
In \cite{EW85} an extension of Arnold's strange duality was considered which embraces on one hand series of bimodal hypersurface singularities and on the other hand, isolated complete intersection singularities (abbreviated ICIS) in $\CC^4$ (see also \cite{E99}). We show that this duality can be regarded as a special case of our mirror symmetry between orbifold curves and cusp singularities with group action. The eight  bimodal series start with six singularity classes in which there are weighted homogeneous representatives. (There are pairs of series with the same head.)
 In Table~\ref{TabBi}, invertible polynomials are chosen for these classes. The names are the names given by Arnold. We consider the pairs $(f, G)$ with $G=G_0$. We list the Dolgachev numbers of the pair $(f,G_0)$ in the second column. By Theorem~\ref{thm:mirror} they coincide with the Gabrielov numbers of the cusp singularities with group action $(f^T(x,y,z)-xyz, G_0^T)$. Moreover, they are also the Gabrielov numbers of the ICIS in $\CC^4$ which are dual to the bimodal singularities (see \cite{E99}). In some of these cases, the invariant part of the singularity $f^T$ under the action of the group $G_0^T$ is isomorphic to the corresponding ICIS. For each of these invertible polynomials $f$, the number $c_f$ is equal to 2. Hence the group $G_0^T$ is isomorphic to $\ZZ/2\ZZ$. It operates on $\CC^3$ by $(x,y,z) \mapsto (-x,-y,z)$. The invariant polynomials of this action are $X:=x^2, Y:=xy, Z:=z, W:=y^2$. They satisfy the relation $XW=Y^2$. If the equations of the invariant part of the singularity $f^T$ under the action of the group $G_0^T$ coincide with the defining equation of an ICIS  dual to the singularity $f$, then we list them on the right hand side of Table~\ref{TabBi}. 
\begin{table}[h]
\begin{center}
\begin{tabular}{|c|c|c|c||c|c|}
\hline
Series & Head & $A_{(f,G_0)}$  & $f$   & $(f^T)^{-1}(0)/G_0^T$ & Dual \\
\hline
$J_{3,k}$ & $J_{3,0}$ & $2,2,2,3$ & $x^6y+y^3+z^2$ & $\left\{ \begin{array}{c} XW-Y^2 \\ X^3 +YW+Z^2 \end{array}\right\}$  & $J_9'$ \\
$Z_{1,k}$ & $Z_{1,0}$ & $2,2,2,4$ & $x^5y + xy^3 +z^2$ & $\left\{ \begin{array}{c} XW-Y^2 \\ X^2Y +YW+Z^2 \end{array}\right\}$ & $J_{10}'$ \\
$Q_{2,k}$ & $Q_{2,0}$ & $2,2,2,5$ & $x^4y + y^3 + xz^2$ & $\left\{ \begin{array}{c} XW-Y^2 \\ X^2Z +YW+Z^2 \end{array}\right\}$  & $J_{11}'$\\
$W_{1,k}$ & $W_{1,0}$ & $2,2,3,3$ & $x^6+y^2+yz^2$  & $\left\{ \begin{array}{c} XW-Y^2 \\ X^3 +WZ+Z^2 \end{array}\right\}$ & $K_{10}'$ \\
$W^\sharp_{1,k}$ &                   & &  &  & $L_{10}$ \\
$S_{1,k}$ & $S_{1,0}$ & $2,2,3,4$ & $x^5+xy^2+yz^2$ & $\left\{ \begin{array}{c} XW-Y^2 \\ X^2Y +WZ+Z^2 \end{array}\right\}$  & $K_{11}'$ \\
$S^\sharp_{1,k}$ &   & & & & $L_{11}$ \\
$U_{1,k}$ & $U_{1,0}$ & $2,3,3,3$ & $x^3+xy^2+yz^3$  &   & $M_{11}$ \\
\hline
\end{tabular}
\end{center}
\caption{Duality between bimodal series and ICIS in $\CC^4$} \label{TabBi}
\end{table}

As another example we consider the invertible polynomial $f(x,y,z)=x^2+xy^3+yz^5$. The canonical weight system is $W_f=(15,5,5;30)$, so $c_f=5$ and the reduced weight system is $W_f^{\rm red}=(3,1,1;6)$. The genus of $C_{(f, G_0)}$ is equal to two, there are no exceptional orbits of the $\CC^\ast$-action. The Dolgachev numbers of the pair $(f,G_0)$ are $(5,5,5)$ and $G_0^T=\frac{1}{5}(1,3,1)$. This group has two elements of age 1. The singularity $f^T(x,y,z)-xyz$ is right equivalent to the cusp singularity $x^5+y^5+z^5-xyz$. We recover the example of \cite{Se}. Similarly, let $f(x,y,z)=x^3y+y^3z+z^3x$. Then the genus of  $C_{(f, G_0)}$ is equal to three, $G_0^T=\frac{1}{7}(1,2,4)$ and $f^T(x,y,z)-xyz$ is right equivalent to the cusp singularity $x^7+y^7+z^7-xyz$.

More generally, let $g$ be an integer with $g \geq2$ and consider the invertible polynomial $f(x,y,z)=x^{2g+1}+y^{2g+1}+z^{2g+1}$ together with the group $G:=\frac{1}{2g+1}(1,1,2g-1)$. Then the genus of the curve $C_{(f,G)}$ is equal to $g$ and we recover the examples of \cite{Ef}.


\end{document}